\theoremstyle{plain} 
\newtheorem{theorem}{Theorem}[section]
\newtheorem{corollary}[theorem]{Corollary}
\newtheorem{lemma}[theorem]{Lemma}
\newtheorem{proposition}[theorem]{Proposition}
\newtheorem{problem}[theorem]{Problem}
\newtheorem{remark}[theorem]{Remark}
\newtheorem{example}{Example}
\theoremstyle{definition}
\newtheorem*{defn}{Definition}
\numberwithin{equation}{section}
\newcommand{\B}{\mathbb{B}}
\newcommand{\C}{\mathbb{C}}
\newcommand{\D}{\mathbb{D}}
\newcommand{\N}{\mathbb{N}}
\renewcommand{\P}{\mathbb{P}}
\newcommand{\Q}{\mathbb{Q}}
\newcommand{\R}{\mathbb{R}}
\renewcommand{\S}{\mathbb{S}}
\newcommand{\Z}{\mathbb{Z}}
\newcommand{\CP}{\mathbb{C}\mathbb{P}}
\newcommand{\I}{\mathrm{i}}
\newcommand{\x}{\mathbf{x}}
\newcommand{\s}{\mathbf{s}}
\def\im{\mathop{\rm im}\nolimits}
\newcommand{\cC}{\mathscr{C}}
\newcommand{\cH}{\mathscr{H}}
\newcommand{\cJ}{\mathscr{J}}
\newcommand{\cO}{\mathscr{O}}
\newcommand{\setof}[1]{\left\{#1\right\}}
\newcommand{\wt}{\widetilde}
\newcommand{\dist}{\mathrm{dist}}
\def\ccup{\smallsmile}
\def\ccap{\smallfrown}
\def\Ext{\mathrm{Ext}}
\def\Hom{\mathrm{Hom}}
\def\colim{\mathop{\rm colim}\nolimits}
\begin{document}
\title[On the Hodge conjecture for $q$-complete manifolds]
{On the Hodge conjecture for $\mathbf q$-complete manifolds}

\author{Franc Forstneri\v c}
\author{Jaka Smrekar}
\author{Alexandre Sukhov}

\address{F.\ Forstneri\v c, Faculty of Mathematics and Physics, University of Ljubljana, and Institute of Mathematics, Physics and Mechanics, Jadranska 19, 1000 Ljubljana, Slovenia}
\email{franc.forstneric\,@fmf.uni-lj.si}

\address{J.\ Smrekar, Faculty of Mathematics and Physics, University of Ljubljana, and Institute of Mathematics, Physics and Mechanics, Jadranska 19, 1000 Ljubljana, Slovenia}
\email{jaka.smrekar@fmf.uni-lj.si}

\address{A.\ Sukhov, Universite Lille-1, Laboratoire Paul Painleve,
U.F.R.\ de Mathematiques, F-59655 Villeneuve d'Ascq Cedex, France}
\email{Alexandre.Sukhov@math.univ-lille1.fr}

\subjclass[2010]{Primary:   14C30, 32F10;  Secondary: 32E10, 32J25}
\date{\today}
\keywords{Hodge conjecture, complex analytic cycle, $q$-complete manifold, Stein manifold, Poincar\'e-Lefschetz duality}

\begin{abstract}
We establish the Hodge conjecture for the top dimensional cohomology group with 
integer coefficients of  any $q$-complete complex manifold $X$ with $q<\dim X$.
This holds in particular for the complement $X=\CP^n\setminus A$ of any complex projective manifold
defined by $q<n$ independent equations.
\end{abstract}

\maketitle


\section{Introduction}
\label{sec:intro}

Every irreducible $p$-dimensional closed complex subvariety $Z$ in a compact complex manifold 
$X$ defines an integral homology class $[Z]\in H_{2p}(X;\Z)$  \cite{AH1961}. 
A finite linear combination $Z=\sum_j n_j Z_j$ of such subvarieties
with integer coefficients is called an {\em  analytic cycle} in $X$, and
the corresponding homology class $z=\sum_j n_j z_j\in H_{2p}(X;\Z)$ is an
{\em analytic homology class}. A cohomology class $u \in H^{2k}(X;\Z)$ is said to be (complex) 
{\em analytic} if $u$ corresponds under Poincar\'e duality 
to an analytic homology class $z\in H_{2p}(X;\Z)$ with $p=\dim X -k$. 
The same notions can be considered with rational coefficients $n_j\in\Q$. 
If $X$ is compact K\"ahler, we have 
$H^{2k}(X;\C)=\oplus_{i+j=2k} H^{i,j}(X)$, and the image in $H^{2k}(X;\C)$ 
of any analytic cohomology class belongs to $H^{k,k}(X)$. 
The {\em Hodge conjecture} \cite{Hodge1952} states that every rational class
$u\in H^{2k}(X;\Q)\cap H^{k,k}(X)$ of type $(k,k)$ of a compact projective manifold 
is analytic, hence algebraic by Chow's theorem.
(Hodge's original conjecture for integer coefficients fails both in the projective  
and in the Stein case, see Atiyah and Hirzebruch \cite{AH1961} and 
Grothendieck \cite{Grothendieck1969}.)
Although the Hodge conjecture spawned a great number of works 
(cf.\ \cite{AH1961,Buh1970,CG1975,Deligne,Demailly2012,SouleVoisin}
and the references therein), it remains open.

Assume now that $M$ is a compact complex manifold of dimension $n$ 
with boundary $\partial M$. In view of the Poincar\'e-Lefschetz duality 
\begin{equation}
\label{eq:PD1}
	H^{k}(M;G) \cong H_{2n-k}(M,\partial M;G),  \quad k=0,1,\ldots,2n
\end{equation} 
(see e.g.\ \cite[Theorem 7.7, p.\ 227]{Massey} or \cite[Theorem 20, p.\ 298]{Spanier}), 
the Hodge conjecture amounts to asking whether every cohomology class in 
$H^{2k}(M;\Z)$ or $H^{2k}(M;\Q)$ can be represented by a {\em relative analytic cycle}
consisting of closed complex subvarieties of $M$ with boundaries in $\partial M$. 

Our first main result is the following. We use the notion of $q$-completeness due to 
Grauert \cite{Grauert1959,Grauert:q-convexity}, see below for more information.

%
%
%
%
\begin{theorem}\label{th:Hodge1}
Let $X$ be a complex manifold of dimension $n>1$ and $M\subset X$ be
a compact $q$-complete domain for some $q\in\{1,\ldots,n-1\}$.
If  the number $n+q-1=2k$ is even, then every cohomology class in $H^{2k}(M;\Z)$
is Poincar\'e dual to an  analytic cycle $Z=\sum_j n_j Z_j$ of dimension 
$p=n-k$, where each $Z_j$ is an embedded complex submanifold of $M$ 
(immersed with normal crossings if $q=1$) with smooth boundary $\partial Z_j\subset \partial M$.
\end{theorem}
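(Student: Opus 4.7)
My approach is to combine Morse theory for $q$-convex exhaustion functions with the Oka--Grauert principle in the $q$-complete setting. By the Andreotti--Grauert--Morse theorem, the $q$-complete domain $M$ has the homotopy type of a CW complex of real dimension at most $n+q-1 = 2k$, so $H^{2k}(M;\Z)$ is its top non-trivial cohomology group. Via the Poincar\'e--Lefschetz duality (\ref{eq:PD1}), this group is identified with the relative homology $H_{2p}(M,\partial M;\Z)$, where $p = n-k$, and the theorem becomes the statement that every such relative class is represented by an integer combination of complex $p$-dimensional subvarieties of $M$ with smooth boundaries on $\partial M$.

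The natural first attempt is to realize a given class $\alpha \in H^{2k}(M;\Z)$ as the top Chern class $c_k(E)$ of a continuous complex rank-$k$ vector bundle on $M$, and then use the Oka--Grauert principle (extended from Stein to $q$-complete manifolds) to make $E$ holomorphic; a transverse holomorphic section $s$ of $E$ would then have zero set $Z = s^{-1}(0)$ of the correct complex dimension $p$ and the correct Poincar\'e dual class. The obstacle is that not every integer class is a $c_k$ of a rank-$k$ bundle in general---this is precisely the Atiyah--Hirzebruch obstruction to the integral Hodge conjecture mentioned in the introduction. However, for classes in the \emph{top} cohomology degree the relevant secondary cohomology operations land in cohomology groups of degree $>2k$, which vanish on $M$ for dimension reasons. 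One should therefore be able to produce finitely many bundles $E_1,\ldots,E_r$ and integers $n_1,\ldots,n_r$ with $\sum_j n_j c_k(E_j) = \alpha$.

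Given this data, the rest of the construction proceeds as follows. Apply the Oka--Grauert principle to replace each $E_j$ by a holomorphic bundle, and use the Andreotti--Grauert vanishing $H^j(M;\cO(E_j)) = 0$ for $j \geq q$ to produce a rich family of global holomorphic sections. A transversality plus Oka--Weil approximation argument on a slightly larger $q$-complete neighborhood $M' \supset M$ in $X$ yields, for each $j$, a holomorphic section $s_j$ of $E_j$ which is transverse to the zero section throughout $M$ and transverse to $\partial M$; its zero set $Z_j := s_j^{-1}(0) \subset M$ is then a pure $p$-dimensional complex analytic subvariety with smooth boundary on $\partial M$, embedded for $q \geq 2$ and immersed with normal crossings for $q = 1$. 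By standard intersection theory, $Z := \sum_j n_j Z_j$ is the desired analytic cycle with $[Z]$ Poincar\'e dual to $\alpha$.

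The hardest step will be the topological realization of $\alpha$ as a $\Z$-linear combination of top Chern classes: one has to navigate the Atiyah--Hirzebruch obstructions and confirm that they vanish in the top-dimensional setting, possibly replacing individual rank-$k$ bundles by coherent sheaves with finite locally free resolutions if needed. A secondary but nontrivial issue is the boundary transversality of the sections $s_j$, which relies on $M$ embedding as a neighborhood retract of a slightly larger $q$-complete domain inside $X$, so that Runge--Oka--Weil approximation with prescribed boundary behavior can be applied without breaking genericity.
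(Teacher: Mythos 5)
Your plan relies on two steps that break down precisely when $q > 1$, and the breakdown is in fact the reason the paper introduces its method.

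First, you propose to make a topological rank-$k$ complex vector bundle $E_j$ on $M$ holomorphic by ``the Oka--Grauert principle (extended from Stein to $q$-complete manifolds).'' No such extension exists: the Oka--Grauert principle fails in general on $q$-convex manifolds for $q > 1$, and the paper says so explicitly in the introduction. The Grauert theorem identifying topological and holomorphic classification of vector bundles is genuinely a Stein (i.e.\ $1$-complete) phenomenon, built on the vanishing of $H^1$ with coherent coefficients; for $q$-complete $M$ one only has vanishing in degrees $\ge q$. This is not a technical gap that can be patched; it is the obstruction the paper works around.

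Second, even granting a holomorphic structure on $E_j$, your appeal to Andreotti--Grauert vanishing $H^j(M;\cO(E_j))=0$ for $j\ge q$ does not ``produce a rich family of global holomorphic sections.'' Vanishing of higher cohomology says nothing about $H^0$. In particular $H^0(M;\cO(E_j))$ can be trivial, and there is no Oka--Weil/Runge approximation available on the target $q$-complete manifold to manufacture sections transverse to the zero section. (On a Stein manifold both issues are resolved by Cartan's Theorems A and B, which again are $1$-complete results.) Your observation that the Atiyah--Hirzebruch secondary obstructions land in degrees $>2k$ and hence vanish in the top degree is correct and is indeed part of the picture --- it reproduces Buh\v staber's computation in the top degree for $q=1$ --- but it only addresses the topological step, not the two analytic steps above.

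The paper's proof goes in a completely different direction. Rather than representing $\alpha$ by Chern classes and intersecting with zero sets of sections, it works directly with holomorphic maps $f\colon V\to M$ from a fixed strongly pseudoconvex Stein domain $V$ (ultimately a ball $\bar\B^p\subset\C^p$), using the lifting theorem of Drinovec Drnov\v sek and Forstneri\v c (Theorem~\ref{th:lifting}) to push $f(\partial V)$ inductively across the level sets of a $q$-convex Morse exhaustion $\rho$ of $M$. All of the function theory (solution of $\overline\partial$, peak functions, gluing of sprays) takes place on the \emph{source} Stein domain $V$, not on the target $M$, which is why no Oka principle or Cartan theory on $M$ is required. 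Each critical point of $\rho$ of maximal index $n+q-1$ may create one new relative homology generator; this generator is realized explicitly as a holomorphic cross-section $Z_c$ of the attached handle, an ellipsoid (or ball) in a local $\C$-linear slice transverse to the core of the handle (Proposition~\ref{cor:change} plus the analysis around (\ref{eq:Zc})). Theorem~\ref{th:technical} packages this into an inductive statement, Theorem~\ref{th:Hodge2} pushes boundaries into a collar $A$ of $\partial M$, and Theorem~\ref{th:Hodge1} is then deduced by a final transversality and collar argument (Theorem~\ref{th:transv}, Proposition~\ref{prop:extcollar}) that cuts those cycles down so their boundaries lie exactly on $\partial M$.
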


If we allow the $Z_j$'s to have boundaries in a collar around $\partial M$, 
the cycle representing a cohomology class in $H^{2k}(M;\Z)$ can be chosen to
consist of holomorphic images of the unit ball in $\C^p$ with $p=n-k$ 
(see Theorem \ref{th:Hodge2} and Corollary \ref{cor:Hodge2}). 
We also prove the corresponding result for $q$-complete manifolds without boundary,
possibly with infinite topology; see Theorem \ref{th:Hodge3}. 

Before proceeding, we recall the notion of $q$-convexity and $q$-completeness. 
(See Grauert \cite{Grauert1959,Grauert:q-convexity}. A different version of these properties
was introduced by Henkin and Leiterer \cite[Def.\ 4.3]{HL-AG};
here we use Grauert's original definitions.) 

Let $X$ be a complex manifold of dimension $n>1$ and $q\in \N$ be an integer. 
A smooth real function $\rho\colon X\to \R$  is said to be 
{\em $q$-convex} on an open set $\Omega \subset X$ if  the Levi form of $\rho$
has at least $n-q+1$ positive eigenvalues  at every point of $\Omega$. 
(Recall that the Levi form is the hermitian  quadratic form on the holomorphic tangent bundle 
$TX$ which is given in any local holomorphic coordinates
$z=(z_1,\ldots,z_n)$ on $X$ by $L_\rho(z)w = \sum_{j,k=1}^n \frac{\partial^2 \rho(z)}{\partial z_j\partial \bar z_k} w_j\bar w_k$.) The manifold $X$ is $q$-convex if it admits a smooth exhaustion function  
$\rho \colon X \to\R$  which is  $q$-convex on $X\setminus K$ for some compact set $K\subset X$;
$X$ is {\em $q$-complete} if $\rho$ can be chosen $q$-convex on all of $X$. 
A compact domain $M\subset X$ with smooth boundary $\partial M$ is 
said to be $q$-complete if $M=\{\rho\le 0\}$, where $\rho$ is a $q$-convex function 
on a neighborhood of $M$ without critical points on $\partial M=\{\rho=0\}$. 
In particular, if $\rho \colon X \to\R$ is a $q$-convex exhaustion function then 
for any regular value $c$ of $\rho$ the sublevel set $\{\rho\le c\}$ is a  $q$-complete domain in $X$. 
Note that a 1-convex function is a strongly plurisubharmonic function, a 1-complete manifold
is a {\em Stein manifold} (Grauert \cite{Grauert1958}), and a $1$-complete domain 
is a Stein domain with strongly pseudoconvex boundary. 
Every complex manifold of dimension $n$ is trivially $(n+1)$-complete, and is $n$-complete 
if it has no compact connected components (see Ohsawa \cite{Ohsawa1984} and Demailly \cite{Demailly1990}). 
In particular, the manifolds considered in this paper are never compact.
It follows from the maximum principle for subharmonic functions that a $q$-complete manifold does
not contain any compact complex submanifolds of dimension $\ge q$ and this bound is sharp
in general (see Example \ref{ex:projective} in  Sec.\  \ref{sec:examples}).

The most  interesting examples of $q$-convex manifolds for $q>1$ arise as complements
of complex subvarieties. For  example, the complement $\CP^n\setminus A$ of any compact projective
manifold $A \subset \CP^n$ of complex codimension $q$ is $q$-convex
(Barth \cite{Barth1970}). The same holds for the complement of any compact complex submanifold
with Griffiths positive normal bundle in an arbitrary compact complex manifold (Schneider \cite{Schneider}). 
In general $\CP^n\setminus A$ is not $q$-complete but is $(2q-1)$-complete (Peternell \cite{Peternell}); 
if however $A$ is defined by $q$ global equations in $\CP^n$ then $\CP^n\setminus A$
is also $q$-complete. For example, $\CP^n\setminus\CP^{n-q}$ is $q$-complete 
for any pair of integers $1\le q\le n$. Further, if $Y$ is a compact complex manifold, $L\to Y$ 
is a positive holomorphic line bundle, and $s_1,\ldots, s_q:Y \to L$ are holomorphic sections
whose common zero set $A=\{s_1=0,\ldots, s_q=0\}$   
has codimension $q$ in $Y$, then $Y\setminus A$ is $q$-complete 
(see Andreotti and Norguet \cite{AN1966}). A more complete list of examples 
can be found in \cite{Grauert:q-convexity} and in the survey by Col\c toiu \cite{Coltoiu1995}.

The group $H^{n+q-1}(M;\Z)$ appearing in Theorem \ref{th:Hodge1} is the top dimensional  
a priori nontrivial cohomology group of a $q$-complete manifold. Indeed, 
$q$-convexity of a function is a stable property in the fine $\cC^2$ Whitney topology, 
so every $q$-complete manifold admits a $q$-convex {\em Morse} exhaustion function.  
The Morse index of any critical point of such a function is $\le n+q-1$ 
(see \cite[p.\ 91]{FF:book} for the quadratic normal form), 
so it follows from Morse theory that a $q$-complete manifold $M$ of dimension $n$ 
is a handlebody with handles of indices at most $n+q-1$. In particular, for any abelian group $G$
we have
\begin{equation}\label{eq:coh-vanishes}
		H^k(M;G) = 0\qquad \forall k>n+q-1.
\end{equation}

Before discussing further results, we compare 
Theorem \ref{th:Hodge1} with the known results in the literature,
indicate why the standard proofs do not apply in our situation, and 
outline the method that we introduce to address this problem.

There are relatively few results concerning the Hodge conjecture for noncompact manifolds.
For a Stein manifold $X$  (the case $q=1$ of Theorem \ref{th:Hodge1}), the Hodge conjecture 
holds for all cohomology groups $H^{2k}(X;\Q)$ with rational coefficients, 
but it fails in general for integer coefficients; see Atiyah and Hirzebruch \cite{AH1961}, 
Buh\v staber \cite{Buh1970},  and Cornalba and Griffiths \cite{CG1975}.
(It follows from Oka's theorem on complex line bundles that the Hodge conjecture 
holds for the lowest dimensional integral cohomology group $H^2(X;\Z)$, 
cf.\ Kodaira and Spencer \cite{KS1953}.)
Atiyah and Hirzebruch \cite{AH1960}  showed  that on any complex manifold $X$,
a necessary condition for a cohomology class $z \in H^{2k}(X;\Z)$ to be analytic 
is that $z$ lies in the kernels of all differentials of the Atiyah-Hirzebruch spectral sequence associated to $X$. 
Any given cohomology class $z$ satisfies this necessary condition after multiplication by some integer $N(z)\in \N$. 
An explicit expression for $N(z)$ was computed by Buh\v staber \cite{Buh1970}; it follows from 
his result that if $z$ belongs to the top dimensional nontrivial cohomology group of a $q$-complete manifold, 
then $N(z) = 1$. This explains why in the present article we do not need to consider cohomologies with 
rational coefficients. In fact, the special case $q=1$ of our Theorem \ref{th:Hodge1} corresponds 
to Buh\v staber's result in this top dimension. On a Stein manifold, the necessary condition 
of Atiyah and Hirzebruch  is also sufficient (see Cornalba and Griffiths \cite{CG1975} for a detailed proof). 
However, as mentioned by Deligne \cite{Deligne},  it is not known whether the Atiyah-Hirzebruch  
necessary condition is also sufficient on non-Stein manifolds. 
Our results give the  affirmative answer  to this question for the top dimensional cohomology 
group of a $q$-complete manifold.

Differentials of the Atiyah-Hirzebruch spectral sequence annihilate any non-torsion
element of a cohomology group. From this point of view it is natural to give an example 
of a $q$-complete manifold $X$ with a nontrivial torsion part in the top cohomology 
$H^{n+q-1}(X;\Z)$. Here is an example with $n=2$ and $q=1$,
i.e., a Stein surface. This is a special case of the example provided by
Proposition \ref{prop:coh} in Sect.\ \ref{sec:examples}.

\begin{example} \label{ex:Ex1}
Consider a smooth complex curve $C$ in $\CP^2$ of degree $d > 1$ and genus 
$g=(d-1)(d-2)/2$. Its complement
$X=\CP^2\setminus C$ is a Stein surface and we have that
\[
	H^{2}(X;\Z)=\Z_d \oplus  \Z^{\beta_1}= \Z_d \oplus  \Z^{2g}
\] 
where  $\beta_1=2-\chi(A)=2g$ is the first Betti number of $A$. 
\end{example}

By contrast, the top absolute homology group $H_{n+q-1}(X;\Z)$ is always free.
In the Stein case this is a classical result of Andreotti and Frankel \cite{AF1959};
see also Andreotti and Narasimhan \cite{AN1962} and Hamm \cite{Hamm1} for Stein spaces.
The result was generalized to the $q$-complete case by Sorani \cite{So1962} and Hamm \cite{Hamm2}.

The proofs  in \cite{AH1961,Buh1970,CG1975}
proceed by representing even dimensional cohomology classes by  Chern classes of complex vector bundles.
The Oka-Grauert principle \cite{GrauertOP} implies that every complex vector bundle over a Stein manifold 
admits a compatible  holomorphic vector bundle structure. (See also \cite{FF:book}.)
The zero set of a generically chosen holomorphic section of such a bundle
is an analytic cycle that is Poincar\'e dual to its Chern class. 
A similar approach is used on compact K\"ahler manifolds with ample holomorphic line bundles. 
Analytic cycles obtained in this way are given by holomorphic equations, so
we have no  information on the complex structure of their irreducible components.

The Oka-Grauert principle fails in general on $q$-convex manifolds for $q>1$.
In the present paper we introduce a completely different method which relies on the technique of constructing proper holomorphic maps, immersions and embeddings of strongly pseudoconvex Stein domains
to $q$-complete manifolds, due to Drinovec Drnov\v sek and Forstneri\v c \cite{DF2007,DF2010}
(see Theorem \ref{th:lifting} in Sec.\ \ref{sec:lifting}). We work with holomorphic maps from 
specific strongly pseudoconvex domains into $M$, inductively stretching their boundaries 
towards the boundary $\partial M$. This technique does not rely on the function theory of the target manifold, 
but only of the source strongly pseudoconvex domain where solutions of the 
$\overline\partial$-problem are readily available. This allows
us to keep the same complex structure on these domains during the entire process.
At every critical point of maximal index $n+q-1$ of the exhaustion function on $M$,
a new component of the analytic cycle may appear, given as a cross-section of the corresponding 
handle (see Proposition \ref{cor:change}). This cross-section can be realized in 
local holomorphic coordinates as the intersection of a $\C$-linear subspace $L\subset\C^n$
of complex dimension $p=(n-q+1)/2$ with a thin round tube around the core $E$ of the handle.
Such intersection is an ellipsoid in $\C^p$ (the image of the unit ball in $\C^p$ 
under an $\R$-linear automorphism), so we can obtain a representation of
cohomology classes in $H^{n+q-1}(M;\Z)$ by analytic cycles 
whose irreducible components are normalized by ellipsoids.  However, 
it turns out that the precise choice of a domain in $L$ will not be important as long as it is
small enough, strongly pseudoconvex, and it contains the intersection point $E\cap L$.
In particular, we are free to choose a ball in $L\cong\C^p$; this leads to 
analytic cycles whose irreducible components are embedded or immersed copies 
of the ball $\B^p$. Due to rigidity phenomena in Cauchy-Riemann geometry, one can not 
push the boundaries of these balls exactly into the boundary of $M$. We choose instead  an interior collar 
$A\subset M$ around the boundary $\partial M$, that is, a compact neighborhood of $\partial M$ in $M$ 
homeomorphic to $\partial M\times [0,1]$, with $\partial M = \partial M\times \{0\} \subset A$. 
The complement $N=\overline{M\setminus A}$ is then a compact manifold with boundary 
in $\mathring M$ that is homeomorphic to $M$. Since the inclusions $N\hookrightarrow M$ 
and $\partial M\hookrightarrow A$ are homotopy equivalences, they induce isomorphisms 
\begin{equation}\label{eq:collar}
	 H^k(M;G)\cong H^k(N;G),\qquad H_k(M,\partial M; G)\cong H_k(M,A;G).
\end{equation}
(See Lemma \ref{lem:collar}.)  By the Lefschetz-Poincar\'e duality we have
\begin{equation}\label{eq:PD}
		H^{k}(N;G) \cong H_{2n-k}(M,M\setminus N;G)
\end{equation}
(see \cite[Proposition 6.4, p.\ 221]{Massey}). From (\ref{eq:collar}) and (\ref{eq:PD}) it follows that 
\[
		H^k(M;G)\cong H_{2n-k}(M,A;G); 
\]
that is, cohomology classes of $M$ can be represented by cycles with boundaries in a collar 
around $\partial M$. In light of this, we have the following version of Theorem \ref{th:Hodge1}.

%
%
%
%
\begin{theorem}\label{th:Hodge2}
Let $M$ be a compact $q$-complete domain in a complex manifold $X$ of dimension $n$,
where $q\in\{1,\ldots,n-1\}$, and let $A\subset M$ be a collar around $\partial M$. 
If  the number $n+q-1=2k$ is even, then every class in $H^{2k}(M;\Z)$
is represented by an analytic cycle $Z=\sum_j n_j Z_j$ of dimension 
$p=n-k=(n-q+1)/2$ with integer coefficients, where each $Z_j$ is an embedded complex submanifold 
of $M$ with smooth boundary $\partial Z_j\subset A$ (immersed with normal crossings if $q=1$) 
that is biholomorphic to the ball $\B^p\subset \C^p$.
\end{theorem}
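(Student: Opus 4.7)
The plan is to revisit the proof of Theorem \ref{th:Hodge1} and replace its ellipsoidal source domains by unit balls $\B^p \subset \C^p$, at the cost of weakening the boundary condition from $\partial Z_j \subset \partial M$ to $\partial Z_j \subset A$. As a preliminary, combining the isomorphisms (\ref{eq:collar}) with the Lefschetz-Poincar\'e duality (\ref{eq:PD}) gives
\[
H^{2k}(M;\Z) \cong H_{2p}(M, A;\Z), \qquad 2p = 2n - 2k = n - q + 1,
\]
so that realizing a class $u \in H^{2k}(M;\Z)$ by an analytic cycle with boundary in $A$ corresponds to realizing its Poincar\'e-Lefschetz dual as a relative homology class in $H_{2p}(M, A;\Z)$.

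Next I would pick a $q$-convex Morse exhaustion $\rho \colon X \to \R$ with $M = \{\rho \le 0\}$ regular and enumerate the critical points $p_1, \ldots, p_N \in \mathring M$ of maximal Morse index $n+q-1 = 2k$ in order of increasing critical value; by (\ref{eq:coh-vanishes}) and standard Morse theory these are the only critical points contributing generators to $H^{2k}(M;\Z)$. Near each $p_j$, the quadratic normal form for a $q$-convex Morse function at a maximal-index critical point (cf.\ \cite[p.\ 91]{FF:book}) provides local holomorphic coordinates in which one can choose a complex $p$-plane $L_j \subset \C^n$ through $p_j$ transverse to the core $E_j$ of the associated handle; a sufficiently small open ball $B_j \subset L_j \cong \C^p$ centered at $p_j$ then furnishes a local holomorphic embedding of $\B^p$ into $M$ whose Poincar\'e-Lefschetz dual is precisely the new generator introduced at $p_j$. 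I would then apply the Drinovec Drnov\v sek--Forstneri\v c lifting theorem (Theorem \ref{th:lifting}) inductively across the lower critical values of $\rho$ up to $0$ to enlarge $B_j$ to a holomorphic map $f_j \colon \B^p \to M$, smooth up to the boundary, with $f_j(\partial \B^p) \subset A$. A generic perturbation argument, together with the self-intersection dimension count $4p - 2n = 2 - 2q$ (negative for $q \ge 2$, zero for $q = 1$), makes $f_j$ an embedding for $q \ge 2$ and an immersion with isolated transverse double points for $q = 1$. Setting $Z_j = f_j(\B^p)$ and picking integer coefficients $n_j$ from the expansion of $u$ in the basis of top-dimensional handles, the cycle $Z = \sum_j n_j Z_j$ represents $u$ as required.

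The main obstacle is precisely the step that produces $f_j$ with its boundary sphere placed inside the collar $A$ rather than on $\partial M$. CR rigidity forbids the latter: the unit sphere in $\C^p$ is the most symmetric strongly pseudoconvex CR hypersurface, and generically $\partial M$ is not locally CR-equivalent to any piece of the sphere, so there is no holomorphic map $\B^p \to \mathring M$ extending continuously to $\overline{\B^p}$ with $\partial \B^p$ sent into $\partial M$. What Theorem \ref{th:lifting} does deliver, and what precisely makes the collar formulation natural, is a map whose boundary sphere can be pushed into any preassigned open neighborhood of $\partial M$ inside $\mathring M$; choosing this neighborhood inside $A$ yields exactly the required boundary condition. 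The inductive handle-by-handle scheme crossing subcritical levels of $\rho$, along with the bookkeeping of integer coefficients from the cellular handle decomposition, carries over verbatim from the proof of Theorem \ref{th:Hodge1}.
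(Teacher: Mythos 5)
Your proposal reproduces the essential content of the paper's Theorem~\ref{th:technical}: dualize to $H_{2p}(M,A;\Z)$, realize the new generator appearing at each critical point of maximal Morse index $n+q-1$ by a small holomorphic ball in a $\C$-linear cross-section of the attached handle, and lift those balls across the remaining critical levels via Theorem~\ref{th:lifting}, with CR rigidity explaining why the boundaries land only in the collar $A$ rather than on $\partial M$. The one discrepancy is the logical ordering: the paper proves Theorem~\ref{th:Hodge2} first, directly from Theorem~\ref{th:technical}, and then \emph{derives} Theorem~\ref{th:Hodge1} from it by intersecting the cycles transversally with $M$ and trimming via Proposition~\ref{prop:extcollar}, rather than the reverse as you assume; this inversion does not affect the substance.
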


When $n=2$ and $q=1$ (that is, when $M$ is a strongly pseudoconvex domain
in a Stein surface), we get cycles consisting of analytic discs, i.e., 
holomorphic images of the unit disc $\D=\{\zeta \in \C\colon |\zeta| <1\}$. Since one can always
push the boundaries of discs precisely into the boundary of a strongly 
pseudoconvex domain  (see \cite{FG1992}), we obtain the following corollary
to Theorem \ref{th:Hodge2}. (See also J\"oricke \cite{Joricke2009}, especially Corollary 3 on p.\ 78.)

%
%
%
%
\begin{corollary}\label{cor:Hodge2}
If $M$ is a strongly pseudoconvex Stein domain of dimension $2$, then every class in
$H^2(M;\Z)$ can be represented by an analytic cycle whose irreducible components
are properly immersed discs with normal crossings that are smooth up to the boundary.
\end{corollary}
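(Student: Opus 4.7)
The plan is to apply Theorem \ref{th:Hodge2} in the special case $n=2$, $q=1$ and then to push the boundaries of the resulting analytic discs out of the collar $A$ onto $\partial M$ using the theory of analytic discs attached to strongly pseudoconvex boundaries from \cite{FG1992}.

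First I would invoke Theorem \ref{th:Hodge2} with these values. Here $n+q-1=2k=2$, so $k=1$ and $p=n-k=1$. Since $q=1$, the theorem provides, for a given class in $H^2(M;\Z)$, an analytic cycle $Z=\sum_j n_j Z_j$ whose components are immersed-with-normal-crossings complex submanifolds of dimension one, each biholomorphic to $\B^1=\D$, smooth up to the boundary, with $\partial Z_j\subset A$. Concretely each $Z_j$ is the image of a proper immersion $f_j\colon\overline{\D}\to M$, smooth up to $\partial\D$, with $f_j(\partial\D)\subset A$. Because the collar $A$ is at our disposal, I would choose it in advance to be an arbitrarily thin tubular neighborhood of $\partial M$ in $M$.

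Next, I would apply the following fact from \cite{FG1992}: in a strongly pseudoconvex Stein domain $M$, any analytic disc smooth up to the boundary whose boundary lies in a sufficiently thin neighborhood of $\partial M$ can be replaced, by an arbitrarily small $\cC^\infty$-perturbation, by an analytic disc still smooth up to $\partial\D$ and whose boundary lies exactly on $\partial M$. Since $A$ is thin, this applies individually to each $f_j$ and yields proper immersions $\tilde f_j\colon\overline{\D}\to M$, smooth up to $\partial\D$, with $\tilde f_j(\partial\D)\subset\partial M$ and $\tilde f_j$ as $\cC^\infty$-close to $f_j$ as desired. A further generic choice of these perturbations preserves the normal-crossings condition, since transversality of self-intersections and of pairwise intersections is an open dense property for $\cC^\infty$ maps between surfaces.

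Finally, since each $\tilde f_j$ may be joined to $f_j$ by a continuous path of smooth-up-to-the-boundary analytic discs whose boundaries stay inside $\overline A\cup\partial M$, the new cycle $\tilde Z=\sum_j n_j\tilde Z_j$ with $\tilde Z_j=\tilde f_j(\overline{\D})$ is homologous to $Z$ relative to $A$, hence by (\ref{eq:collar}) and (\ref{eq:PD}) it represents the same class in $H^2(M;\Z)$. The sole real obstacle in this argument is the boundary-pushing step: Theorem \ref{th:Hodge2} already does all the topological and complex-analytic heavy lifting, and what remains is the delicate but classical Cauchy--Riemann deformation of \cite{FG1992}, which is available precisely because $\partial M$ is strongly pseudoconvex (and not merely $q$-convex with $q>1$, which is why the general Theorem \ref{th:Hodge1} only places $\partial Z_j$ inside $\partial M$ rather than onto it).
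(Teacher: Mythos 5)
Your proposal is correct and follows essentially the same route as the paper: the paper does not give a detailed proof of Corollary~\ref{cor:Hodge2} but states it as an immediate consequence of Theorem~\ref{th:Hodge2} together with the remark that one can push the boundaries of discs exactly onto the strongly pseudoconvex boundary $\partial M$ by \cite{FG1992}, which is precisely your argument. One terminological slip: the maps $f_j$ produced by Theorem~\ref{th:Hodge2} are not yet proper (their boundaries lie in the interior collar $A$, not on $\partial M$); properness is exactly what the \cite{FG1992} boundary-pushing step achieves, as your subsequent argument correctly does.
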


We now explain a version of Theorem \ref{th:Hodge1} for $q$-complete manifolds
$X$ without boundary, possibly with infinite topology. 

Let $\rho\colon X\to\R$ be a $q$-convex Morse exhaustion function. Choose an exhaustion 
$M_1\subset M_2\subset\cdots \subset\bigcup_{j=1}^\infty M_j = X$, where each 
$M_j$ is a regular sublevel set $M_j =\{x\in X\colon \rho(x)\le c_j\}$ and 
there is at most one critical point of $\rho$ in each difference $\mathring M_j\setminus M_{j-1}$.
The inclusion $M_j \hookrightarrow M_{j+1}$ induces a homomorphism 
$H^{k}(M_{j+1};G)\to H^{k}(M_j;G)$, and we have a well defined inverse limit
\begin{equation}\label{eq:invlim}
	\cH^{k}(X;G) = \lim_j H^{k}(M_j;G).
\end{equation}
This definition is due to Atiyah and Hirzebruch \cite{AH1961}; see Section \ref{sec:openMfd} 
below for the details.
There is a natural surjective homomorphism $H^k(X;G)\to \cH^{k}(X;G)$ from the singular cohomology whose
kernel can be described by means of the first derived functor of the inverse limit. This
kernel is trivial when $G$ is a field (e.g., when $G=\Q$) and also when $G=\Z$ and the homology group
$H_{k-1}(X)$ is not too bad. We discuss this in Section \ref{sec:equality}.

An irreducible closed subvariety $Z$ of dimension $p$ in $X$ with small singular locus
defines a cohomology class in $H^{2n-2p}(X;\Z)$. (If $Z$ is smooth, this is 
the Thom class of $Z$ in $X$ made absolute; see Section \ref{sec:openMfd} below for the general case.)
In this setting, we say that the corresponding cohomology class in $\cH^{2n-2p}(X;\Z)$ is an analytic
class represented by $Z$.

%
%
%
%
\begin{theorem}  \label{th:Hodge3}
Let $X$ be a complex manifold of dimension $n>1$ which is $q$-complete for some $q\in \{1,\ldots,n-1\}$.
If the number $n+q-1=2k\ge 2$ is even, then every class in $\cH^{2k}(X;\Z)$ is 
represented by an analytic cycle $\sum_r n_r Z_r$  with integer coefficients, where
each $Z_r$ is a properly embedded (immersed with normal crossings if $q=1$) complex submanifold
of $X$ biholomorphic to the ball of dimension $p=n-k=(n-q+1)/2$.
\end{theorem}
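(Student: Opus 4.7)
The plan is to reduce Theorem~\ref{th:Hodge3} to an inductive application of the techniques used to prove Theorems~\ref{th:Hodge1} and~\ref{th:Hodge2}. Pick a $q$-convex Morse exhaustion $\rho\colon X\to\R$ and regular values $c_1<c_2<\cdots\to+\infty$ such that each slab $\{c_j<\rho\le c_{j+1}\}$ contains at most one critical point of $\rho$, and set $M_j=\{\rho\le c_j\}$. Because no critical point of $\rho$ has index exceeding $n+q-1$, the restriction $H^{2k}(M_{j+1};\Z)\to H^{2k}(M_j;\Z)$ is an isomorphism on slabs containing no critical point of maximal index, and when a critical point of maximal index is crossed, the difference is detected by a single new handle. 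Choose nested thin interior collars $A_j\subset M_j$ around $\partial M_j$ with $A_j\subset \mathring M_{j+1}\setminus M_{j-1}$, and write the class $u\in\cH^{2k}(X;\Z)$ as a compatible sequence $(u_j)$ with $u_j\in H^{2k}(M_j;\Z)$ and $u_{j+1}|_{M_j}=u_j$.

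The heart of the argument is to construct inductively, for every $j\ge 1$, proper holomorphic maps $f_r^{(j)}\colon\overline{B_r^{(j)}}\to M_j$ and integer multiplicities $n_r$ with the following properties: each $B_r^{(j)}\Subset\B^p$ is a smoothly bounded strongly pseudoconvex subdomain biholomorphic to $\B^p$; the analytic cycle $Z^{(j)}=\sum_r n_r\, f_r^{(j)}(\overline{B_r^{(j)}})$ represents $u_j$; $f_r^{(j)}$ is an embedding (or an immersion with normal crossings when $q=1$) sending $\partial B_r^{(j)}$ into $A_j$ and the shell $\overline{B_r^{(j)}}\setminus\mathring B_r^{(j-1)}$ into $\overline{M_{j+1}\setminus \mathring M_{j-1}}$; and $f_r^{(j+1)}|_{\overline{B_r^{(j)}}}=f_r^{(j)}$ with $\bigcup_j B_r^{(j)}=\B^p$. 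To pass from step $j$ to step $j+1$, if no critical point of maximal index is crossed I would enlarge each $B_r^{(j)}$ to a slightly larger strongly pseudoconvex $B_r^{(j+1)}\Subset\B^p$ and extend $f_r^{(j)}$ to a proper holomorphic map into $M_{j+1}$ with boundary in $A_{j+1}$ by invoking the lifting/stretching theorem of Drinovec Drnov\v sek and Forstneri\v c (Theorem~\ref{th:lifting}), exactly as in the proofs of Theorems~\ref{th:Hodge1} and~\ref{th:Hodge2}; if a critical point of maximal index is crossed, I would additionally introduce a new ball component realizing the new generator of $H^{2k}(M_{j+1};\Z)$ as a transverse $\C^p$-slice through the top of the handle (Proposition~\ref{cor:change}), with the integer multiplicity dictated by $u_{j+1}$.

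For each $r$ the nested sub-balls $B_r^{(r)}\Subset B_r^{(r+1)}\Subset\cdots$ exhaust $\B^p$, and the $f_r^{(j)}$ assemble into a single holomorphic map $f_r\colon\B^p\to X$. The containment $f_r^{(j)}(\overline{B_r^{(j)}}\setminus\mathring B_r^{(j-1)})\subset\overline{M_{j+1}\setminus \mathring M_{j-1}}$ forces any compact subset of $X$ to meet $f_r(\B^p)$ inside the image of some $\overline{B_r^{(j)}}$, making $f_r$ proper; the embedding (respectively normal-crossing immersion) property is preserved in the limit by the uniform transversality built into the induction. The cycle $Z=\sum_r n_r\, f_r(\B^p)$ then meets each $M_j$ in a relative analytic cycle cohomologous in $H_{2p}(M_j,A_j;\Z)$ to $Z^{(j)}$, and therefore represents $u_j$ under the duality $H^{2k}(M_j;\Z)\cong H_{2p}(M_j,A_j;\Z)$ afforded by~(\ref{eq:collar}) and~(\ref{eq:PD}); hence $Z$ represents the inverse limit class $u\in\cH^{2k}(X;\Z)$.

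The principal technical obstacle lies in the inductive step: applying the lifting theorem so that the source domain $\B^p$ is preserved (the enlargements $B_r^{(j)}\Subset B_r^{(j+1)}$ occur inside $\B^p$, which is the natural Stein ambient for the theorem), so that distinct extended and newly born components develop no intersections beyond the prescribed normal crossings (a general-position argument), and so that the successive boundary spheres sit in the thin collars $A_{j+1}$ with enough control that the countable iteration produces a \emph{proper} map $\B^p\to X$. This uniform control over infinitely many steps is the genuinely new ingredient absent from the compact case of Theorems~\ref{th:Hodge1} and~\ref{th:Hodge2}.
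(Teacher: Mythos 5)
Your overall framework — inverse-limit presentation of a class as a compatible sequence $(u_j)$, induction over the exhaustion $\{M_j\}$, lifting boundaries by Theorem~\ref{th:lifting}, introducing new ball components precisely at critical points of maximal index, and assembling the result into a locally finite proper analytic cycle — matches the paper's proof at the level of strategy. However, the inductive mechanism you propose does not work as stated, and the failure is exactly at the step you flag as the ``principal technical obstacle.''

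The gap: you demand that $f_r^{(j+1)}\big|_{\overline{B_r^{(j)}}}=f_r^{(j)}$, i.e.\ that the map at step $j+1$ is an exact holomorphic \emph{extension} of the map at step $j$ to a strictly larger subdomain $B_r^{(j+1)}\Subset\B^p$. Theorem~\ref{th:lifting} supplies no such extensions. A holomorphic map on $\overline{B_r^{(j)}}$ has no reason to extend holomorphically past its boundary, and the construction in \cite{DF2007,DF2010} never extends a map across the boundary of its domain: it keeps the source domain $V$ fixed and produces a homotopy $f_t\colon V\to X$ satisfying only the \emph{approximation} estimate (iii), $\sup_{K}\dist(f_t,f_0)<\epsilon$ on a prescribed compact $K\subset\mathring V$. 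The correct way to build the proper map is therefore the one the paper uses: fix $V=\overline{\B}^p$ once and for all, produce a sequence of holomorphic maps $f^j\colon V\to X$ with $[f^j(V),f^j(\partial V)]=\zeta^j_r$, with $f^j(\partial V)\subset X\setminus M_j$, and with $f^j$ close to $f^{j-1}$ on an exhaustion of compacts $K_{j}=\{v\in V\colon d(v,\partial V)\ge 1/j\}$; one then passes to a uniform-on-compacts limit $\phi\colon\mathring V=\B^p\to X$, and by choosing the approximations tight enough and invoking Theorem~\ref{th:transv} one keeps $\phi$ a proper embedding (immersion with normal crossings if $q=1$) transverse to each $\partial M_j$. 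The agreement of $[\phi(\B^p)\cap M_j,\phi(\B^p)\cap\partial M_j]$ with $\zeta^j_r$ is then a homology computation (via Proposition~\ref{prop:extcollar}), not a tautology coming from exact agreement on nested domains. Once you replace ``extend'' by ``approximate and take limits,'' the rest of your argument — in particular the determination of the multiplicities $n_r=\lim_i n^i_r$ from the coherence $\iota_i(z_i)=z_{i-1}$ and the identification of the dual cohomology class via the Thom class — does go through as you indicate, and indeed follows the paper.
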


The cycle $\sum_r n_r Z_r$ in Theorem \ref{th:Hodge3} is infinite (but locally finite) in general;
it can be chosen finite if $X$ admits  a $q$-convex exhaustion function with finitely many critical points.

\begin{remark}\label{rem:finitely-gen}
If the homology group $H_{2k-1}(X;\Z)$ is the direct sum of a free abelian group and a torsion
abelian group, for example, if it is finitely generated, then the natural morphism 
$H^{2k}(X;\Z)\to\cH^{2k}(X;\Z)$ is an isomorphism (cf.\ Proposition \ref{prop:vanishing}).
Hence in this case Theorem \ref{th:Hodge3} applies to every class in $H^{2k}(X;\Z)$
(see Corollary \ref{cor:vanishing}).
\end{remark}

%
%
%
%
\begin{remark}\label{rem:handles}
Our proof will show that, in the absence of critical points of index $>m$ for some even integer 
$0\le m \le n+q-1$, Theorems \ref{th:Hodge1}, \ref{th:Hodge2}, and
\ref{th:Hodge3}  hold for the top dimensional nontrivial 
cohomology group $H^{m}$. In particular, if $X$ is an odd dimensional Stein manifold
which is subcritical, in the sense that it admits a strongly plurisubharmonic exhaustion function
$\rho\colon X\to\R$ without critical points of index $n=\dim X$, then Theorem
\ref{th:Hodge3} holds for the group $H^{n-1}(X;\Z)$.
\end{remark}

Our method also applies to analytic cycles of lower dimension, 
but these are Poincar\'e dual to higher cohomology classes which are trivial in view of
 (\ref{eq:coh-vanishes}). On the other hand, it does not work  for cycles of dimension $>n-q+1$, 
and hence for cohomology groups of dimension $<n+q-1$, 
because we are unable to push the boundaries of such cycles across 
the critical points of index $n+q-1$ of a $q$-convex exhaustion function.
This difficulty is not only apparent as shown by examples in \cite{DF2010},
and is further demonstrated by the fact that even in the Stein case $(q=1)$, 
the analogues of Theorems \ref{th:Hodge1} and \ref{th:Hodge3} 
for integer coefficients fail in general \cite{Buh1970}. 
It is a challenging problem to give a proof of the corresponding results from \cite{Buh1970,CG1975} 
for lower dimensional cohomology groups with rational coefficients avoiding
the use of Chern classes and the Oka-Grauert principle. By finding such proof, one
might hope to answer the following question.

\begin{problem}
Assume that $X$ is an $n$-dimensional complex manifold which is $q$-complete for some
$q\in \{1,\ldots,n-1\}$. Does the Hodge conjecture hold for the cohomology groups
$H^{2k}(X;\Q)$ when $2\le 2k< n+q-1$?
\end{problem}

The Hodge conjecture has also been considered in the category of symplectic manifolds.
Donaldson \cite{Donaldson1996,Donaldson1996-2} constructed symplectic submanifolds
of any even codimension in a given compact symplectic manifold $(X,\omega)$ of dimension $2n\ge 4$.
In particular, he showed that if the cohomology class $[\omega/2\pi]\in H^2(X;\R)$ admits a lift
to an integral class  $h\in H^2(X;\Z)$, then for any sufficiently large integer $k\in\N$ the Poincar\'e
dual of $kh$ in $H_{2n-2}(X;\Z)$ can be represented by a compact symplectic submanifold 
of real codimension two in $X$.  In light of this it is natural to ask the following question.

\begin{problem}
It is possible to represent certain cohomology classes of  noncompact symplectic manifolds
by noncompact cycles consisting of proper symplectic submanifolds? 
\end{problem}


\section{Topological preliminaries}
\label{sec:top-prelim}
In this section we review the necessary topological background.
In particular, Proposition \ref{cor:change} below gives a precise description of the 
effect of a handle attachment on the relative homology (and, by the Poincar\'e-Lefschetz duality,
on the cohomology) of a compact manifold with boundary. 
Although this is standard, we need very precise geometric information 
on the cycles generating the relative homology group. For this reason, and lacking 
a precise reference, we provide a detailed proof.

Denote by $\B^k$ the closed ball in $\R^k$ and by $\S^{k-1}=\partial \B^k$ the $(k-1)$-sphere. Let $M$ be an orientable closed $n$-manifold with boundary $\partial M$, and let 
\begin{equation} \label{eq:phi}
	\phi\colon\S^{k-1}\times\B^{n-k}\hookrightarrow\partial M
\end{equation}
be the attaching map of a $k$-handle $H=\B^k\times \B^{n-k}$. We assume that $\phi$ is a homeomorphism
onto its image. Set $N=M\cup_\phi H$; this is a compact manifold with boundary
\[
		\partial N=(\partial M\setminus \im \phi) \cup (\B^{k}\times\S^{n-k-1}).
\]

\begin{proposition}\label{cor:change}
Assume that $(M,\partial M)$ is a compact manifold with boundary, and let $N$ be obtained by adding
a handle of index $k$ to $M$ by an attaching map $\phi$ (\ref{eq:phi}). Assume that for some $j\in\{1,\ldots,n-k\}$
the group $H_j(M,\partial M)$ can be realized by a collection $\cC$ of geometric $j$-cycles in $(M,\overline{\partial M\setminus\im\phi})$.
Then $H_j(N,\partial N)$ can be realized by the cycles in $\cC$ prolonged by inclusion, and, if $j=n-k$, possibly by an additional relative
disc in any fibre of $H$ (viewed as a $j$-disc bundle over $\B^k$).
\end{proposition}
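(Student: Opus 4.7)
The plan is to compare the relative homology of $(M, \partial M)$ and $(N, \partial N)$ via a relative Mayer-Vietoris argument built from the handle decomposition $N = M \cup_\phi H$. Write $A := \overline{\partial M \setminus \im \phi}$, $P := \im \phi = \phi(\S^{k-1} \times \B^{n-k})$, and $\partial_+ H := \B^k \times \S^{n-k-1}$, so that $N = M \cup H$ with $M \cap H = P$, and $\partial N = A \cup \partial_+ H$ with $A \cap \partial_+ H = \partial P = \S^{k-1} \times \S^{n-k-1}$. The resulting Mayer-Vietoris sequence for the excisive pair decomposition reads
\[
	\cdots \to H_j(P, \partial P) \to H_j(M, A) \oplus H_j(H, \partial_+ H) \to H_j(N, \partial N) \to H_{j-1}(P, \partial P) \to \cdots ,
\]
and a K\"unneth calculation, using the product identifications $(P, \partial P) = (\S^{k-1}, \emptyset) \times (\B^{n-k}, \S^{n-k-1})$ and $(H, \partial_+ H) = (\B^k, \emptyset) \times (\B^{n-k}, \S^{n-k-1})$, shows that $H_j(P, \partial P)$ is concentrated in degrees $n-k$ and $n-1$, while $H_j(H, \partial_+ H) = \Z$ only for $j = n-k$, generated by the cocore $\{0\} \times \B^{n-k}$.

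Throughout the range $j \in \{1, \ldots, n-k\}$ one has $H_{j-1}(P, \partial P) = 0$. For $j < n-k$ the other two boundary groups also vanish, so the inclusion induces an isomorphism $H_j(M, A) \xrightarrow{\cong} H_j(N, \partial N)$. In parallel, the triple long exact sequence for $(M, \partial M, A)$, combined with the excision isomorphism $H_*(\partial M, A) \cong H_*(P, \partial P)$, yields $H_j(M, A) \cong H_j(M, \partial M)$ in the same range. Consequently the cycles of $\cC$, which by hypothesis lie in $(M, A)$ and generate $H_j(M, \partial M)$, generate $H_j(N, \partial N)$ after prolongation by inclusion.

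At the critical degree $j = n-k$, the Mayer-Vietoris sequence yields a surjection
\[
	H_{n-k}(M, A) \oplus \Z \twoheadrightarrow H_{n-k}(N, \partial N),
\]
with the $\Z$ summand generated by the cocore; the first Mayer-Vietoris map sends a fiber disc $\{\mathrm{pt}\} \times \B^{n-k} \subset P$ to its class in $H_{n-k}(M, A)$ paired, up to sign, with the cocore. On the other hand, the same triple sequence shows that the surjection $H_{n-k}(M, A) \twoheadrightarrow H_{n-k}(M, \partial M)$ has kernel generated by these fiber-disc classes. Combining the two, the prolongations of the classes of $\cC$ together with the cocore generate $H_{n-k}(N, \partial N)$; since all fibers of $H$ are mutually homologous in $(H, \partial_+ H)$, the cocore may be replaced by a relative disc in any fibre, as claimed. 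The one delicate point I anticipate is the identification of the fiber disc of $P$ with the cocore of $H$ under the first Mayer-Vietoris map, which one verifies by the straight-line homotopy in $\B^k$ that slides an attaching point $p \in \S^{k-1}$ to $0$; all remaining ingredients are standard K\"unneth and excision computations.
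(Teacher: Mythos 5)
Your proof is correct but takes a genuinely different route from the paper's. The paper establishes the inclusion-induced isomorphism $H_j(M,\overline{\partial M\setminus\im\phi})\cong H_j(N,\partial N)$ for $0<j<n$ via Poincar\'e--Lefschetz duality and naturality of the cap product (Lemmas \ref{cap_naturality} and \ref{prop:com}), then reads off the generators from the exact sequence (\ref{eq:triadSeq}) of the triad $(M;\im\phi,\overline{\partial M\setminus\im\phi})$. You instead run a relative Mayer--Vietoris sequence for the decomposition $N=M\cup H$, $\partial N=\overline{\partial M\setminus\im\phi}\cup(\B^k\times\S^{n-k-1})$, and pair it with the long exact sequence of the triple $(M,\partial M,\overline{\partial M\setminus\im\phi})$, which after excising down to $(\im\phi,\partial(\im\phi))$ is precisely (\ref{eq:triadSeq}). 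Both routes therefore share the step that identifies the kernel of $\iota_*\colon H_j(M,\overline{\partial M\setminus\im\phi})\to H_j(M,\partial M)$ with the fibre-disc class; they differ only in how $H_j(N,\partial N)$ is related to $H_j(M,\overline{\partial M\setminus\im\phi})$. The paper's duality argument uses the orientability assumed in the setup preceding the proposition and yields the crisper statement that the inclusion $(M,\overline{\partial M\setminus\im\phi})\hookrightarrow(N,\partial N)$ is a homology isomorphism in every degree below $n$. Your Mayer--Vietoris argument needs no orientability and makes the provenance of the extra generator, the cocore of the handle, geometrically transparent from the start; but at the critical degree $j=n-k$ it only delivers the surjection $H_{n-k}(M,\overline{\partial M\setminus\im\phi})\oplus\Z\twoheadrightarrow H_{n-k}(N,\partial N)$, which suffices for the proposition though it carries less structural information than the isomorphism extracted by the paper.
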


Before proving the proposition we recall some preliminary material.

Given pairs of topological spaces $A \subset M$ and $B\subset N$, the notation
$f\colon (M,A)\to (N,B)$ means that $f\colon M\to N$ is a continuous map satisfying $f(A)\subset B$. 
A a similar notation is used for maps of triads; thus $f\colon (M;A_1,A_2)\to (N;B_1,B_2)$
means that $f:M\to N$ is a continuous map satisfying $f(A_i)\subset B_i$ for $i=1,2$.

By $\ccap$ and $\ccup$ we denote the cap and the cup
product on cohomology, respectively. For general background to homology 
and cohomology we refer to Bredon \cite{Bredon}  or Spanier \cite{Spanier}.

We recall the following naturality property of the cap-product (see Spanier, 5.6.16).
This holds for an arbitrary coefficient group which we omit from the notation.

\begin{lemma}\label{cap_naturality}
Let $f\colon M\to N$ map a subset $A_1\subset M$ to $B_1$ and $A_2\subset M$ to $B_2$, that is, $f\colon(M;A_1,A_2)\to(N;B_1,B_2)$ is a map of triads. Let $u\in H^q(N,B_1)$ and $z\in H_n(M,A_1\cup A_2)$. 
Let $f_1\colon(M,A_1)\to(N,B_1)$, $f_2\colon(M,A_2)\to(N,B_2)$ and 
$\Bar f\colon(M,A_1\cup A_2)\to(N,B_1\cup B_2)$ be maps defined by $f$. 
The following relation holds in $H_{n-q}(N,B_2)$:
\[ 
	f_2{}_{*}\left(f_1^*u\ccap z\right)=u\ccap\Bar f_*z. 
\]
\end{lemma}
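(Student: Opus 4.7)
The plan is to work at the chain/cochain level using the explicit Alexander--Whitney formula for the cap product, and then descend to the appropriate relative quotients. This is essentially Spanier's 5.6.16 and could be invoked directly, but for completeness here is how the argument would proceed.

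Recall that for a singular $n$-simplex $\sigma\colon\Delta^n\to M$ and a $q$-cochain $\alpha$ on $M$, the cap product at the chain level is
\[
\alpha\ccap\sigma=\alpha(\sigma\circ\lambda_q)\cdot(\sigma\circ\mu_{n-q}),
\]
where $\lambda_q\colon\Delta^q\hookrightarrow\Delta^n$ and $\mu_{n-q}\colon\Delta^{n-q}\hookrightarrow\Delta^n$ are the front and back face inclusions. A one-line computation then yields the absolute naturality identity at the chain level: for a cochain $\alpha$ on $N$ and a simplex $\sigma$ in $M$, one has $f_\#(f^\#\alpha\ccap\sigma)=\alpha\ccap f_\#\sigma$, where $f_\#$ and $f^\#$ denote the induced chain and cochain maps. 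Extending bilinearly gives the identity on the full absolute singular chain complex of $M$.

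To pass to the triad setting, represent $u\in H^q(N,B_1)$ by a cocycle $\alpha$ vanishing on simplices with image in $B_1$, and represent $z\in H_n(M,A_1\cup A_2)$ by a chain $c$ with boundary in $S_*(A_1)+S_*(A_2)$. The hypothesis $f(A_1)\subset B_1$ ensures that $f^\#\alpha$ vanishes on simplices in $A_1$, so it represents $f_1^*u\in H^q(M,A_1)$. The front-face/back-face decomposition then shows that $f^\#\alpha\ccap c$ represents a well-defined class in $H_{n-q}(M,A_2)$: any simplex of $c$ whose front $q$-face lies in $A_1$ contributes $0$, while any simplex whose back $(n-q)$-face lies in $A_2$ contributes to $S_{n-q}(A_2)$. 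Applying $f_\#$ and invoking $f(A_2)\subset B_2$ produces a cycle in $S_{n-q}(N)$ relative to $B_2$ whose class is $f_2{}_{*}(f_1^*u\ccap z)$. By the absolute chain-level identity applied to $\alpha$ and $c$, this same cycle also equals $\alpha\ccap f_\#c$, which represents $u\ccap\Bar f_*z$ in $H_{n-q}(N,B_2)$, giving the stated equality.

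The main difficulty is not computational but bookkeeping: one has to verify at each step that the cap product, the pullback, and the pushforward descend to the correct relative quotient, and in particular that the pairing $H^q(N,B_1)\otimes H_n(M,A_1\cup A_2)\to H_{n-q}(M,A_2)$ is well defined before $f_2{}_{*}$ is applied. All three compatibility conditions $f(A_1)\subset B_1$, $f(A_2)\subset B_2$, and $f(A_1\cup A_2)\subset B_1\cup B_2$ enter essentially, each controlling a different piece of the descent from chain level to (co)homology.
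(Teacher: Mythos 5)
Your proposal is correct, and the reasoning is the standard chain-level argument for relative cap-product naturality. The paper itself does not prove this lemma; it simply cites Spanier, Theorem 5.6.16, so there is no paper proof to compare against. What you have written is essentially a reconstruction of Spanier's argument: establish the one-line chain-level identity $f_\#(f^\#\alpha\ccap\sigma)=\alpha\ccap f_\#\sigma$ via the front-face/back-face Alexander--Whitney formula, and then track how the three inclusion conditions $f(A_1)\subset B_1$, $f(A_2)\subset B_2$, $f(A_1\cup A_2)\subset B_1\cup B_2$ ensure that each of the pullback, the cap, and the pushforward descends to the right relative quotient. One small point to tighten: when you argue that $f^\#\alpha\ccap c$ is a well-defined relative $(n-q)$-cycle modulo $A_2$, the cleanest route is to first note that $\delta\alpha=0$ and use the Leibniz rule $\partial(\beta\ccap c)=\pm(\delta\beta\ccap c)\pm(\beta\ccap\partial c)$, so that $\partial(f^\#\alpha\ccap c)=\pm f^\#\alpha\ccap\partial c$ with $\partial c\in S_{n-1}(A_1)+S_{n-1}(A_2)$; then your front-face/back-face observation applied to chains lying entirely in $A_1$ (respectively $A_2$) shows this lands in $S_{n-q-1}(A_2)$. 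With that clarification the argument is complete.
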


In other words, Lemma \ref{cap_naturality} renders the following diagram commutative.
\begin{equation*}\begin{diagram}
 \node{H^q(N,B_1)}	\arrow{e,t}{f_1^*}	\arrow{s,l}{u\mapsto u\ccap\Bar f_*(z)}	
 \node{H^q(M,A_1)} \arrow{s,r}{v\mapsto v\ccap z} \\
 \node{H_{n-q}(N,B_2)}	\node{H_{n-q}(M,A_2)} \arrow{w,t}{f_2{}_*}
\end{diagram}\end{equation*}

Let $f\colon(M;\im\phi,\overline{\partial M\setminus\im\phi})\to(N;H,\partial N)$ and 
$J\colon(N;\emptyset,\partial N)\to(N;H,\partial N)$ be inclusions of triads, and let 
$\Bar f\colon(M,\partial M)\to(N,H\cup\partial N)$ and $\Bar J\colon(N,\partial N)\to(N,H\cup\partial N)$
be the associated maps of pairs. We choose compatible orientation classes 
$\mu\in H_n(M,\partial M)$ and $\nu\in H_n(N,\partial N)$ in the
sense that $\Bar f_*(\mu)=\Bar J_*(\nu)\in H_n(N,H\cup\partial N)$. 
Consider the following diagram where the horizontal arrows are induced
by inclusions and the vertical arrows by cap-products.

\begin{equation*}
\divide\dgARROWLENGTH by2
\begin{diagram}
\node{H^q(N)} \arrow{s,r}{\ccap\nu} \node{H^q(N,H)} \arrow{w} \arrow{s,r}{\ccap \Bar J_*(\nu)}
	\arrow{e}	\node{H^q(M,\im\phi)}	\arrow{e,t}{i^*} \arrow{s,r}{\ccap\mu} \node{H^q(M)} \arrow{s,l}{\ccap\mu} \\
	\node{H_{n-q}(N,\partial N)} \arrow{e,=} \node{H_{n-q}(N,\partial N)} 
	\node{H_{n-q}(M,\overline{\partial M\setminus\im\phi})} \arrow{w}
	\arrow{e,t}{\iota_*}	\node{H_{n-q}(M,\partial M)}
\end{diagram}\end{equation*}

\begin{lemma}
\label{prop:com}
For any $q$, the above diagram is commutative. All the arrows, with the possible exception of $i^*$ and $\iota_*$, are isomorphisms when $q>0$.
\end{lemma}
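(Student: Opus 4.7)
My plan is to establish the two parts of the statement separately: commutativity of each of the three subsquares, and the isomorphism claims for all arrows other than $i^*$ and $\iota_*$.

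\textbf{Commutativity.} For each subsquare I identify a map of triads to which Lemma~\ref{cap_naturality} can be applied. For the left subsquare I use the identity of $N$ viewed as a map of triads $J\colon(N;\emptyset,\partial N)\to(N;H,\partial N)$, with $u\in H^q(N,H)$ and $z=\nu\in H_n(N,\partial N)$; then Lemma~\ref{cap_naturality} directly yields $J^*u\ccap\nu=u\ccap\bar J_*(\nu)$. For the middle subsquare I use the inclusion of triads $f\colon(M;\im\phi,\overline{\partial M\setminus\im\phi})\to(N;H,\partial N)$, with $u\in H^q(N,H)$ and $z=\mu\in H_n(M,\partial M)$; the built-in compatibility $\bar f_*(\mu)=\bar J_*(\nu)$ of the chosen orientation classes gives precisely the identity we need. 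The right subsquare is more delicate and I treat it in two steps. First, applying Lemma~\ref{cap_naturality} to the identity triad map $(M;\im\phi,\overline{\partial M\setminus\im\phi})\to(M;\im\phi,\partial M)$ expresses $\iota_*(u\ccap\mu)$ (cap into $H_{n-q}(M,\overline{\partial M\setminus\im\phi})$ followed by enlargement) as $u\ccap\mu$ in the coarser pairing $H^q(M,\im\phi)\otimes H_n(M,\partial M)\to H_{n-q}(M,\partial M)$. Second, the elementary functoriality of cap in the cochain slot gives $u\ccap\mu=i^*(u)\ccap\mu$ in $H_{n-q}(M,\partial M)$: at the chain level $u$ and $i^*(u)$ are represented by the same cocycle, and the relative target $H_{n-q}(M,\partial M)$ is indifferent to whether we require this cocycle to vanish on $\im\phi$.

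\textbf{Isomorphisms when $q>0$.} The three cap-product maps $\ccap\nu\colon H^q(N)\to H_{n-q}(N,\partial N)$, $\ccap\mu\colon H^q(M,\im\phi)\to H_{n-q}(M,\overline{\partial M\setminus\im\phi})$, and $\ccap\mu\colon H^q(M)\to H_{n-q}(M,\partial M)$ are instances of Poincar\'e--Lefschetz duality for the compact manifolds with boundary $N$ and $M$; the middle one uses the decomposition $\partial M=\im\phi\cup\overline{\partial M\setminus\im\phi}$. The top arrow $H^q(N,H)\to H^q(N)$ is an isomorphism from the long exact sequence of the pair $(N,H)$, since $H\cong\B^k\times\B^{n-k}$ is contractible. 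The top arrow $H^q(N,H)\to H^q(M,\im\phi)$ is an excision isomorphism (excise the interior of $H$ slightly retracted away from $\im\phi$). With these in hand, the second vertical $\ccap\bar J_*(\nu)\colon H^q(N,H)\to H_{n-q}(N,\partial N)$ is an isomorphism by commutativity of the left subsquare, and the bottom arrow $H_{n-q}(M,\overline{\partial M\setminus\im\phi})\to H_{n-q}(N,\partial N)$ is an isomorphism by commutativity of the middle subsquare.

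\textbf{Main obstacle.} The only subtle step is the right subsquare. The orientation class $\mu\in H_n(M,\partial M)$ has nonzero image in $H_{n-1}(\partial M,\overline{\partial M\setminus\im\phi})\cong H_{n-1}(\im\phi,\partial\im\phi)$ (namely the fundamental class of the compact manifold with boundary $\S^{k-1}\times\B^{n-k}$), so $\mu$ does not lift to $H_n(M,\overline{\partial M\setminus\im\phi})$ and Lemma~\ref{cap_naturality} cannot be invoked in its most direct form. The two-step workaround above, or an equivalent direct chain-level verification, is the only way around this that I see; the remainder of the proof is routine bookkeeping with excision and the long exact sequence of a pair.
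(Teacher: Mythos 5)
Your argument is correct and takes essentially the same route as the paper: left and middle squares via Lemma~\ref{cap_naturality} applied to $J$ and $f$, the right square as an instance of Poincar\'e--Lefschetz duality for the decomposition of $\partial M$, contractibility of $H$ for $H^q(N,H)\to H^q(N)$, and the remaining two isomorphisms read off from commutativity. The only differences are presentational: where the paper cites Bredon for the right-hand square and identifies $N/H\cong M/\im\phi$ for the arrow $H^q(N,H)\to H^q(M,\im\phi)$, you unpack the right square explicitly (your two steps are in fact two further applications of Lemma~\ref{cap_naturality}, to the identity triad maps $(M;\im\phi,\overline{\partial M\setminus\im\phi})\to(M;\im\phi,\partial M)$ and $(M;\emptyset,\partial M)\to(M;\im\phi,\partial M)$) and use excision plus deformation retraction; both are standard and equivalent, and your observation that $\mu$ does not lift to $H_n(M,\overline{\partial M\setminus\im\phi})$ --- so the one-step application of naturality is unavailable --- is accurate.
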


\begin{proof}
Lemma \ref{cap_naturality} implies that the left-hand side square is commutative by virtue of the map of triads $J$,
and that the middle square is commutative by virtue of $f$. The right-hand side square is induced by the Poincar\'e-Lefschetz duality for the decomposition $\partial M=\overline{\partial M\setminus\im\phi}\cup\im\phi$ 
(\cite{Bredon}, Section VI.9, Problems); all cap-products with $\mu$ and $\nu$ are duality isomorphisms. 
The arrow $H^q(N,H)\to H^q(N)$ is an isomorphism when $q>0$ because $H$ is contractible. 
Using commutativity it follows first that also the cap-product with $\Bar f_*(\mu)=\Bar J_*(\nu)$
is an isomorphism, and second that the central bottom horizontal arrow is an isomorphism. Finally, the arrow $H^q(N,H)\to H^q(M,\im\phi)$ is an isomorphism because the quotients $N/H$ and $M/\im\phi$ are evidently homeomorphic.
\end{proof}

\begin{proof}[Proof of Proposition \ref{cor:change}]
The morphism $\iota_*$ in the above diagram sits in the exact sequence of the triad
$(M; \im\phi,\overline{\partial M\setminus \im\phi})$ as follows:
\begin{equation}\label{eq:triadSeq}	
	H_j(\im\phi,\partial(\im\phi))\to H_j(M,\overline{\partial M\setminus\im\phi}) 
	\xrightarrow{\iota_*} H_j(M,\partial M)\to H_{j-1}(\im\phi,\partial(\im\phi)).
\end{equation}
Here, the pair $(\im\phi,\partial(\im\phi))$ is seen to be the product of pairs $\S^{k-1}\times(\B^{n-k},\S^{n-k-1})$.

If $j<n-k$,  both endgroups in (\ref{eq:triadSeq}) vanish and therefore, $\iota_*$ is an isomorphism.

If $j=n-k$, the sequence (\ref{eq:triadSeq}) reads
\[ 
	\Z\to H_j(M,\overline{\partial M\setminus\im\phi})\xrightarrow{\iota_*} H_j(M,\partial M)\to 0.	
\]
Here, the group of integers $\Z$ is generated by any relative disc $D_\zeta=(\setof{\zeta}\times\B^j,\setof{\zeta}\times\S^{j-1})$ where $\zeta\in\S^{n-j-1}= \S^{k-1}$.
If $\iota_*$ is not an isomorphism, then, clearly, if we add to $\cC$ a relative disc $D_\zeta$ 
(which may generate a torsion element in $H_{j}(M,\overline{\partial M\setminus\im\phi})\cong H_j(N,\partial N)$), 
we obtain a collection of geometric generators for $H_j(M,\overline{\partial M\setminus\im\phi})$
which can be prolonged by inclusion to a set of geometric generators for $H_j(N,\partial N)$. 
We remark that in $H_j(N,\partial N)$, the relative discs $D_\zeta$ are homologous to 
the relative discs in the fibres of the handle $H$.
\end{proof}

\begin{remark}\label{rem:change}
The assumption in Proposition \ref{cor:change}, that the group $H_j(M,\partial M)$ 
can be realized by a collection of geometric $j$-cycles in 
$(M,\overline{\partial M\setminus\im\phi})$, is always satisfied when $j\le n-k$.
Indeed, the boundary sphere $\S^{k-1}=\partial \B^k$ of the core $k$-disc $\B^k$ 
of the handle has dimension $k-1$, while the boundaries of relative cycles representing 
the homology classes in $H_j(M,\partial M)$ have dimension $j-1$.
We can choose these cycles so that only their boundaries intersect $\partial M$.
Since $(j-1) +(k-1) \le (n-k-1)+(k-1) < n-1=\dim \partial M$, 
a general position argument shows that the (finitely many) generators of 
$H_j(M,\partial M)$ can be represented by relative cycles in 
$(M,\partial M)$ whose boundaries avoid the attaching sphere $\phi(\S^{k-1})\subset\partial M$. 
By choosing the transverse disc of the handle sufficiently thin
we can ensure that these cycles avoid the attaching set $\im \phi$ of the handle.  
\end{remark}

\begin{lemma}\label{lem:collar}
Let $M$ be a manifold and $A\subset M$ a collar of the boundary $\partial M$.
Then the inclusion induced homomorphisms $H_q(M,\partial M)\to H_q(M,A)$ are isomorphisms for all $q$.
\end{lemma}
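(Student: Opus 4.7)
The plan is to apply the long exact sequence of the triple $(M,A,\partial M)$ and reduce the statement to the vanishing of the relative groups $H_q(A,\partial M)$. Recall that a collar is, by definition, a compact neighborhood $A$ of $\partial M$ in $M$ together with a homeomorphism $A\cong \partial M\times[0,1]$ under which $\partial M\subset A$ corresponds to $\partial M\times\{0\}$.

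First I would write down the relevant portion of the long exact sequence
\[
	\cdots\to H_q(A,\partial M)\to H_q(M,\partial M)\to H_q(M,A)\to H_{q-1}(A,\partial M)\to\cdots
\]
so that it becomes enough to show $H_q(A,\partial M)=0$ for every $q$. Under the collar identification $A\cong\partial M\times[0,1]$, the inclusion $\partial M\hookrightarrow A$ corresponds to $x\mapsto(x,0)$, which is a deformation retract via the straight-line homotopy $(x,t)\mapsto(x,(1-s)t)$. Hence $(A,\partial M)$ is a good pair and the inclusion is a homotopy equivalence, so all relative homology groups vanish.

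Plugging this vanishing into the exact sequence forces the connecting maps on either side of $H_q(M,\partial M)\to H_q(M,A)$ to have trivial source and target, so the inclusion induced map is an isomorphism for every $q$. The only mild subtlety is making sure that the collar is given as a genuine neighborhood (not merely a one-sided bicollar), so that the triple $(M,A,\partial M)$ is admissible and the excision/homotopy invariance steps apply without modification; this is granted by the hypothesis. Thus there is no real obstacle, and the argument is essentially a one-line consequence of the collar's homotopy type.
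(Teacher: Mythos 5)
Your argument is correct. The paper takes a mildly different (and equally standard) route: rather than invoking the long exact sequence of the triple $(M,A,\partial M)$ and showing $H_q(A,\partial M)=0$, it compares the long exact sequences of the pairs $(M,\partial M)$ and $(M,A)$ via the map induced by inclusion, and applies the 5-lemma, using that $H_q(\partial M)\to H_q(A)$ is an isomorphism (homotopy equivalence) and $H_q(M)\to H_q(M)$ is the identity. The two arguments are essentially dual formulations of the same fact: your vanishing $H_q(A,\partial M)=0$ is, in the paper's diagram, the kernel/cokernel information that the 5-lemma extracts. Your version avoids explicitly invoking the 5-lemma but requires setting up the triple; the paper's version avoids the triple but requires the 5-lemma. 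Both are one-liners once the collar's homotopy type is noted, and both are fully correct.
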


\begin{proof}
Since the inclusion $\partial M\hookrightarrow A$ is a homotopy equivalence, this follows from
naturality of the long homology exact sequence of the pair in conjunction with the 5-lemma.
\end{proof}

Let $(X,A)$ be a pair of topological spaces. In the following proposition we use $C_p(X, A)$ for
the group of relative (singular) $p$-chains with integer coefficients. Let $K$ be an oriented compact
$p$-manifold in $X$ with $\partial K\subset A$. There is an orientation $p$-chain in $C_p(K,\partial K)$
which may be viewed also as a $p$-chain in $C_p(X,A)$. We denote the latter $p$-chain simply by $(K,\partial K)$.
We say that the corresponding element in $H_p(X,A)$ is {\it represented} by $(K,\partial K)$.

\begin{proposition}
\label{prop:extcollar}
Let $M$ be an oriented manifold with boundary $\partial M$. Assume that the manifold $M'$ is obtained from
$M$ by adding an exterior collar $A\cong\partial M\times[0,1]$, so that $\partial M\equiv\partial M\times\setof{0}$ and $\partial M'=\partial M\times\setof{1}$. Let $K$ be an oriented compact $p$-manifold in $M'$ whose boundary $\partial K$ is contained in $\partial M\times(0,1]$. Furthermore, assume that $K$ meets $\partial M$ transversely. Then the geometric intersection of $K$ with $M$ yields a finite collection $\setof{L_i}$ of connected oriented $p$-manifolds with $\partial L_i\subset\partial M$ so that the sum $\sum_i(L_i,\partial L_i)$ is a chain in $C_p(M,\partial M)$, homologous to $(K,\partial K)$ in $C_p(M',A)$. Consequently, if an element $z'$ of $H_p(M',A)$ is represented by a linear combination of $p$-manifolds satisfying the conditions on $K$, there is an element $z$ of $H_p(M,\partial M)$ which is also represented with a linear combination of $p$-manifolds and is mapped to $z'$ under the isomorphism $H_p(M,\partial M)\to H_p(M',A)$. 
\end{proposition}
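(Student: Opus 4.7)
The plan is to use the collar structure of $M'$ near $\partial M$ together with an explicit deformation retraction to identify the class $[K,\partial K]\in H_p(M',A)$ with the image under inclusion of $\sum_i[(L_i,\partial L_i)]\in H_p(M,\partial M)$.

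First I would extract the manifolds $L_i$. Since $K$ is transverse to $\partial M$ and the assumption $\partial K\subset\partial M\times(0,1]$ forces $\partial K\cap\partial M=\emptyset$, the intersection $L:=K\cap M$ is a compact oriented $p$-submanifold of $M$ with boundary $\partial L=K\cap\partial M\subset\partial M$, the orientation being induced from $K$. The connected components of $L$ are precisely the $L_i$ of the statement, so $\sum_i(L_i,\partial L_i)$ is a well-defined relative $p$-chain in $(M,\partial M)$ and represents a class $z\in H_p(M,\partial M)$.

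Next I would introduce the collar retraction $r\colon M'\to M'$ that is the identity on $M$ and sends $(y,t)\in A$ to $(y,0)\equiv y\in\partial M$. The straight-line homotopy $H((y,t),s)=(y,(1-s)t)$ on $A$, extended by the identity on $M$, exhibits $r$ as a deformation retraction of the pair $(M',A)$ onto $(M,\partial M)$; in particular, the inclusion $\iota\colon(M,\partial M)\hookrightarrow(M',A)$ induces an isomorphism $\iota_*\colon H_p(M,\partial M)\xrightarrow{\cong}H_p(M',A)$. Its restriction $r|_K$ becomes a map of pairs $(K,\partial K)\to(M,\partial M)$ because $r(\partial K)\subset r(A)=\partial M$.

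To compute $(r|_K)_*[K,\partial K]$ at the chain level, I would triangulate $K$ so that $L$ is a subcomplex and split the fundamental chain as $[K]=K_M+K_A$ accordingly. Then $K_M$ is exactly the fundamental chain $\sum_i(L_i,\partial L_i)$ of $L$, while $r_*(K_A)$ is a $p$-chain supported in the $(n-1)$-manifold $\partial M$, hence represents the zero class in the relative quotient $C_p(M,\partial M)=C_p(M)/C_p(\partial M)$. Consequently $(r|_K)_*[K,\partial K]=z$ in $H_p(M,\partial M)$. On the other hand, the composition $\iota\circ r|_K$ is homotopic through pair maps to the inclusion $j\colon(K,\partial K)\hookrightarrow(M',A)$ via the same straight-line homotopy, so $\iota_*z=(\iota\circ r|_K)_*[K,\partial K]=j_*[K,\partial K]=z'$ in $H_p(M',A)$, which is the asserted homology in $C_p(M',A)$. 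The linear-combination statement then follows immediately by linearity: given a representative $\sum_k n_k K_k$ of $z'$, apply the construction to each $K_k$ to obtain a class $z_k$ represented by $\sum_i(L_{k,i},\partial L_{k,i})$ and set $z:=\sum_k n_k z_k$.

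The main delicate point I foresee is the chain-level identity $(r|_K)_*[K,\partial K]=[L,\partial L]$ in $H_p(M,\partial M)$. Once a triangulation of $K$ restricting to a triangulation of $L$ is in hand (available because $L$ is a codimension-zero submanifold-with-boundary of $K$ cut out by the codimension-one hypersurface $K\cap\partial M$), the verification reduces to the observation that each $p$-simplex of $K_A$ is mapped by $r$ into the lower-dimensional manifold $\partial M$, so that $r_*(K_A)\in C_p(\partial M)$ is killed by the relative quotient. Orientation and boundary-incidence checks are then routine.
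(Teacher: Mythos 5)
Your proof is correct, but it reaches the conclusion by a different mechanism than the paper. Both arguments rest on the same geometric decomposition: one splits $K$ along $K\cap\partial M$ into the part $L=K\cap M$ and the part $K\cap A$ lying in the exterior collar, and then argues that the collar part is negligible. The paper stays inside the relative complex $C_p(M',A)$: since the closures $L_j'$ of the components of $K\setminus\partial M$ that lie in $A$ are entirely contained in $A$, their fundamental chain already vanishes in $C_p(M')/C_p(A)$, so $(K,\partial K)$ is directly homologous in $C_p(M',A)$ to $\sum_i(L_i,\partial L_i)$, which is itself a chain of $C_p(M,\partial M)$. That is the whole argument, with no further step needed. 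You instead push $[K,\partial K]$ forward along the collar retraction $r\colon(M',A)\to(M,\partial M)$ into $H_p(M,\partial M)$, observe that the collar part lands in $C_p(\partial M)$ and dies in the quotient, and then need the extra homotopy argument (that $\iota\circ r|_K$ is pair-homotopic to the inclusion $j\colon(K,\partial K)\hookrightarrow(M',A)$) to transport the conclusion back to $H_p(M',A)$. Both routes are sound; the paper's is shorter because it avoids the retraction and the closing homotopy computation, whereas yours makes the deformation retraction $(M',A)\simeq(M,\partial M)$ the explicit organizing principle, which some readers may find more transparent. One small remark: when you push a simplicial fundamental chain forward by the (non-simplicial) map $r$ you are implicitly working in singular chains; this is fine and standard, but worth noting since the triangulation is introduced precisely to make the chain-level splitting clean.
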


\begin{proof}
As the compact manifold $K$ meets $\partial M$ transversely (and $\partial K\cap\partial M=\emptyset$), the intersection $K\cap\partial M$ is the union of a finite collection $\cC$ of closed $(p-1)$-manifolds. 
Note that the connected components of $K'=K\setminus\partial M$ are contained
either in $M$ or in $A$. Their closures are connected submanifolds whose boundary components are in $\cC$. Let $\setof{L_i}$ denote the collection of closures of components of $K'$ that are contained in $M$, and let $\setof{L_j'}$ 
denote the collection of closures of components of $K'$ that are contained in $A$. 
Assume that all are oriented compatibly with $K$. Tautologically, $\sum_i(L_i,\partial L_i)$
and $\sum_j(L_j',\partial L_j')$ can be viewed as chains in $C_p(M',A)$ whose sum is homologous to $(K,\partial K)$. 
On the other hand, as the $L_j'$ are entirely in $A$, the chain $\sum_j(L_j',\partial L_j')$ is clearly nullhomologous
in $H_p(M',A)$.  Hence $(K,\partial K)$ is homologous to $\sum_i(L_i,\partial L_i)$ 
and as the latter forms a chain in $C_p(M,\partial M)$, the assertion has been proved.
\end{proof}


\section{Analytic and geometric preliminaries}
\label{sec:lifting}

We adopt the usual convention that a map is holomorphic on a closed subset of 
a complex manifold if it is holomorphic on an open neighborhood of that set. 
When talking about a homotopy of such maps, it is understood that the 
neighborhood is independent of the parameter.

We shall need the following transversality theorem.
Although it follows easily from known results, lacking a precise reference we include a proof. 

%
%
%
%
\begin{theorem} \label{th:transv}
Assume that $V$ is a compact strongly pseudoconvex domain with $\cC^2$ boundary in a Stein manifold $S$,
$X$ is a complex manifold, and $M\subset X$ is a smooth submanifold.
Let $r\in \{0,1,2,\ldots,\infty\}$. Every map $f\colon V\to X$ of class $\cC^r$ which is
holomorphic in the interior $\mathring V=V\setminus \partial V$ can be approximated 
arbitrarily closely in the $\cC^r(V,X)$ topology by holomorphic maps $\tilde f:\wt V\to X$,
defined on an open neighborhood $\wt V \subset S$ of $V$ (depending on $f$), 
such that $\tilde f$ is transverse to $M$. 
\end{theorem}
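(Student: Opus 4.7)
The plan is to proceed in two stages: first approximate $f$ in $\cC^r(V,X)$ by a map $g$ that is holomorphic on an open neighborhood of $V$ in $S$, then perturb $g$ within a finite-dimensional holomorphic family to achieve transversality with $M$. Transversality is an open condition under $\cC^1$-small perturbations on the compact $V$, so the approximation in the second stage will not spoil it.

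For the first stage, I would invoke the Henkin--Ram\'irez integral representation for strongly pseudoconvex domains with $\cC^2$ boundary in a Stein manifold. Working in finitely many holomorphic coordinate charts on $X$ that cover the compact image $f(V)$, the associated integral operator produces, from a $\cC^r$-map holomorphic on $\mathring V$, a $\cC^r$-small correction that extends it holomorphically to a neighborhood of $V$ in each chart. The chart-wise extensions are then glued by solving a $\bar\partial$-problem on a Stein neighborhood of $V$ (Cartan's Theorem B), producing a single holomorphic map $g\colon\wt V\to X$ with $\|g-f\|_{\cC^r(V)}$ arbitrarily small. The gain in regularity afforded by the Henkin kernel on strongly pseudoconvex boundaries is what makes the $\cC^r$ (rather than merely $\cC^0$) bound possible.

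For the second stage, observe that the graph $\Gamma_g\subset\wt V\times X$ is a closed Stein submanifold of the complex manifold $\wt V\times X$, so by Siu's theorem on Stein neighborhoods of Stein subvarieties it admits a Stein open neighborhood $\Omega$. On $\Omega$, Cartan's Theorem A applied to the holomorphic subbundle of $T(\wt V\times X)|_\Omega$ of vectors in the $X$-direction provides finitely many globally defined holomorphic ``vertical'' vector fields $\eta_1,\ldots,\eta_N$ whose values span $T_{g(z)}X$ at every point $(z,g(z))\in\Gamma_g$. Composing the flows of the $\eta_j$ for small times and projecting to $X$ yields a holomorphic map
\[
G\colon\wt V'\times B_\delta^N\longrightarrow X,\qquad G(z,0)=g(z),
\]
on a slightly smaller neighborhood $\wt V'\supset V$ and a ball $B_\delta^N\subset\C^N$, with $\partial_tG(z,0)\colon\C^N\to T_{g(z)}X$ surjective for every $z\in V$. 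After shrinking, $G$ is a holomorphic submersion onto a neighborhood of $g(V)$ in $X$, so $G^{-1}(M)$ is a complex submanifold of $\wt V'\times B_\delta^N$ of the correct codimension. Sard's theorem applied to the projection $G^{-1}(M)\to B_\delta^N$ then yields a set of parameters $t$ of full measure for which $\tilde f:=G(\cdot,t)\colon\wt V'\to X$ is transverse to $M$; choosing $|t|$ sufficiently small preserves $\cC^r$-closeness to $g$, hence to $f$.

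The main obstacle I anticipate is the construction of the dominating family $G$ when the target $X$ is an arbitrary, possibly non-Stein, complex manifold, since the standard Oka--Grauert and Gromov-ellipticity machinery usually assumes a Stein target. The graph-plus-Siu trick above circumvents this: it relocates the problem to the Stein manifold $\Omega$, where holomorphic vector fields along the graph are abundant and can be integrated in the vertical direction. An alternative, more elementary construction---cover $g(V)$ by finitely many holomorphic coordinate charts, use the flat sprays $(x,v)\mapsto x+v$ on each, and patch them by a Cousin-type problem on $\wt V$---is also viable but more laborious; both ultimately reduce the transversality step to a standard application of the parametric Sard theorem.
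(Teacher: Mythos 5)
Your two-stage plan coincides with the paper's proof: first replace $f$ by a holomorphic map defined on an open Stein neighborhood of $V$ in $S$, then perturb that map to make it transverse to $M$. The difference is one of exposition rather than of route: the paper simply cites known results for both stages, whereas you sketch proofs of them. For stage one the paper invokes \cite[Theorem 1.2]{DF2008}, which gives precisely the $\cC^r$-approximation by a map holomorphic on a Stein neighborhood $\Omega\subset S$ of $V$. Your Henkin--Ram\'irez sketch is in the right spirit, but it glosses over the real difficulty: since the target $X$ need not be Stein, one cannot simply ``glue the chart-wise extensions by solving a $\bar\partial$-problem,'' as there is no linear structure in which to subtract and add maps; the actual argument requires a more careful device (for instance working with the graph, embedding into Euclidean space, correcting by $\bar\partial$ there, and projecting back via a local holomorphic retraction), and this is exactly why the paper outsources the step. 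For stage two the paper cites Kaliman--Zaidenberg \cite{KZ} (equivalently \cite[Theorem 7.8.12]{FF:book}), and your construction --- Stein neighborhood of the graph via Siu, Cartan's Theorem A to produce dominating vertical vector fields, a holomorphic family $G(z,t)$ submersive onto a neighborhood of $g(V)$, and parametric Sard --- is essentially their proof, so here too you are unpacking a black box rather than proposing an alternative. One small point worth adding: the paper first fixes a compact $\cO(\Omega)$-convex set $K\subset\Omega$ with $V\subset\mathring K$ before applying \cite{KZ}, because the transversality theorem controls the approximation only uniformly on such a compact holomorphically convex set; you should make the analogous choice so that the final $\cC^r$-bound on $V$ follows from uniform closeness on a neighborhood of $V$ via Cauchy estimates.
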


\begin{proof}
By \cite[Theorem 1.2]{DF2008} we can approximate the map $f\colon V\to X$ 
arbitrarily closely in the $\cC^r(V,X)$ topology by a holomorphic map 
$f_1:\Omega \to X$ on an open Stein neighborhood $\Omega\subset S$ of $V$.  
Pick a compact $\cO(\Omega)$-convex subset $K\subset \Omega$ with $V\subset \mathring K$. 
By Kaliman and Zaidenberg \cite{KZ} we can approximate $f_1$ as
closely as desired, uniformly on $K$, by a holomorphic map $\tilde f\colon \wt V\to X$ 
on an open neighborhood $\wt V$ of $K$ such that $\tilde f$ is transverse to $M$. 
(See also Theorem 7.8.12 in \cite[p.\ 321]{FF:book}. 
In the cited sources this transversality theorem is stated for the case 
when $M$ is a complex submanifold, or a Whitney stratified complex subvariety of $X$,
but the proofs also apply to smooth submanifolds; see \cite{Trivedi}.) 
\end{proof}

\begin{remark} 
We also have the corresponding jet transversality theorem --- the map $\tilde f$ 
in Theorem \ref{th:transv} can be chosen such that its $r$-jet extension 
$j^r f\colon \wt V \to \cJ^r(\wt V,X)$ is transverse to a given 
smooth submanifold $M$ of the complex manifold $\cJ^r(\wt V,X)$ of $r$-jets 
of holomorphic maps $\wt V\to X$. If $X$ is an Oka manifold, 
then the jet transversality theorem holds for
holomorphic maps $S\to X$ from an arbitrary Stein manifold $S$ to $X$; 
see \cite{F2006} or \cite[Sec.\ 7.8]{FF:book}. We shall not need these additions.
\end{remark}

Given a function $\rho\colon X\to \R$, we shall use the notation
\[
		X_{\rho<a}=\{\x\in X: \rho(\x) <a\}, \quad X_{\rho>a}=\{\x\in X: \rho(\x) >a\},
\]
\[		X_{a<\rho<b}=\{\x\in X: a<\rho(\x) <b\}, \quad X_{\rho=a}=\{\x\in X: \rho(\x) =a\},
\]
and similarly for the weak inequalities. If $\rho$ is an exhaustion function, then 
the sets $X_{\rho\le b}$ and $X_{a\le \rho\le b}=X_{\rho\le b}\setminus X_{\rho<a}$ 
are compact for every pair $a,b\in \R$.

The following is the main result of the paper \cite{DF2010}, 
stated in the precise way suited to our present purposes. 
(The special case when $V$ is a bordered Riemann surface is treated in \cite{DF2007}.
The case when $X$ is a domain in $\C^n$ is due to Dor \cite{Dor1995}.)
This is the key analytic ingredient in the proof of our main results.

%
%
%
%
\begin{theorem} {\rm (\cite[Theorem 1.1]{DF2010})} \label{th:lifting}
Let $X$ be a complex manifold of dimension $n>1$,  $\rho\colon X\to\R$ a smooth exhaustion function, 
and $\dist$ a distance function on $X$ inducing the manifold topology. Assume that, 
for some pair of numbers $a<b$, the restriction of $\rho$ to $X_{a \le \rho \le b}$ is a $q$-convex Morse function.
Let $V$ be a compact, smoothly bounded, strongly pseudoconvex domain in a Stein manifold $S$ of dimension $p$, 
and let $f_0 \colon V \to X$ be a holomorphic map such that $f_0(\s)<b$ for all $\s\in V$ and 
$a< f_0(\s) < b$ for all $\s\in \partial V$. 
Suppose that at least one of the following two conditions holds:
\begin{itemize}
\item[\rm (a)] $r:=n-q+1\ge 2p$;
\item[\rm (b)] $r>p$ and $\rho$ has no critical points of index $>2(n-p)$ in $X_{a<\rho<b}$.
\end{itemize}
Given a compact set $K\subset \mathring V$ and numbers $\gamma \in (a,b)$ and $\epsilon>0$, there is a 
homotopy $f_t \colon V\to X$ $(t\in[0,1])$ of holomorphic maps 
satisfying the following properties:
\begin{itemize}
\item[\rm (i)] $f_t(\s) < b$ for all $\s \in V$ and $t\in [0,1]$,
\item[\rm (ii)] $\gamma < f_1(\s) < b$ for all $\s\in \partial V$,
\item[\rm (iii)] $\sup_{\s\in K} \dist(f_t(\s),f_0(\s)) <\epsilon$ for all $t\in [0,1]$, and
\item[\rm (iv)] $\rho(f_t(\s)) > \rho(f_0(\s))-\epsilon$ for all $\s\in V$ and $t\in [0,1]$.  
\end{itemize}
If $2p<n$ then the map $f_1$ can be chosen an embedding, and if $2p=n$ then it can be chosen an immersion
with simple double points (normal crossings).  

If $\rho$ is Morse and $q$-convex on $X_{\rho> a}$ and either 
of Conditions (a), (b) holds on $X_{\rho > a}$, 
then $f_0$ can be approximated uniformly on compacta in $\mathring V$ by proper 
holomorphic maps $\tilde f: \mathring V\to X$ (embeddings if $2p<n$, immersions with 
normal crossings if $2p=n$).
\end{theorem}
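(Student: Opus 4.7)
The plan is to build the homotopy $f_t$ as a finite concatenation of small perturbation steps that push the boundary values $\rho \circ f|_{\partial V}$ upward from just above $a$ to above $\gamma$. I would first choose a fine subdivision $a = a_0 < a_1 < \cdots < a_N$ with $a_N > \gamma$ so that each slab $X_{a_i \le \rho \le a_{i+1}}$ either contains no critical point of $\rho$ or contains exactly one, in its interior. It then suffices to produce, at each step, a small holomorphic deformation that raises $\rho \circ f|_{\partial V}$ above $a_{i+1}$ without decreasing $\rho \circ f$ by more than a prescribed amount anywhere on $V$, and without changing $f$ much on $K$.

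On a slab free of critical points I would use the classical bumping technique of strongly pseudoconvex geometry. In local holomorphic coordinates around any point $x \in f(\partial V)$ in the slab, the $q$-convexity of $\rho$ gives a quadratic normal form in which the Levi form has at least $r = n-q+1$ positive eigenvalues. Over a small boundary bump $V_\alpha \subset V$ with $f(V_\alpha)$ in such a chart, I would add a holomorphic perturbation taking values in the positive Levi directions, large enough to raise $\rho \circ f$ on the bump and small enough elsewhere to control (iii) and (iv). The dimension condition $r > p$ (implied by both (a) and (b)) guarantees that such a perturbation can be realized as a holomorphic map of the $p$-dimensional source into these positive directions. The local modifications would be glued into a global holomorphic one via a Cartan splitting or a $\overline\partial$-estimate on the strongly pseudoconvex $V$, and finitely many such bumpings exhaust the slab.

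To cross a critical point $x_0$ of index $k$ I would move $f(V)$ off the stable manifold $W^s(x_0)$ of the (pseudo)gradient flow of $\rho$, which has real dimension $k$. Under hypothesis (a) or (b) we have $k \le 2(n-p)$, so $2p + k \le 2n$. Theorem \ref{th:transv} lets me approximate $f$ by a holomorphic map transverse to $W^s(x_0)$. When $2p + k < 2n$ the image is generically disjoint from $W^s(x_0)$, and a gradient-flow isotopy pushes the image smoothly through the critical level into the next slab. In the saturated case $2p + k = 2n$ the intersection is a finite transverse point set, and each point must be removed by a local holomorphic surgery in Morse coordinates near $x_0$: one attaches a tiny holomorphic cap, essentially a piece of the core of the corresponding handle, to the image, crossing the critical level while meeting $W^s(x_0)$ only at $x_0$ itself.

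The main obstacle is precisely this saturated case $k = 2(n-p)$, which genuinely occurs in both (a) and (b); here the dimension count is exact, so a soft transversality argument does not suffice and one needs a quantitative local holomorphic model near a critical point of a $q$-convex Morse function to construct the cap while keeping the map holomorphic and preserving (iv). Once $f_1$ has been built, the embedding claim ($2p<n$) or the normal-crossing immersion claim ($2p=n$) follows from one further application of Theorem \ref{th:transv} to $f_1 \times f_1$ on $V \times V \setminus \Delta$, since the self-intersection locus has expected real codimension $2(n-2p)$. The proper-map statement follows by iterating the entire construction along regular levels $c_j \nearrow \sup \rho$, arranging at stage $j$ that the deformation is smaller than $2^{-j}\epsilon$ on a compact $K_j \subset \mathring V$ with $K_j \nearrow \mathring V$, and passing to the Cauchy limit; condition (iv) together with $\rho \circ f_j|_{\partial V} \to \infty$ ensures properness of the limit map $\tilde f\colon \mathring V \to X$.
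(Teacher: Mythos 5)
Your plan for the noncritical slabs is in the right spirit (local lifting in positive Levi directions, globalized by a gluing method on the strongly pseudoconvex source), although the actual technology is Dor's holomorphic peaking functions combined with the gluing of holomorphic sprays from \cite{DF2007}, not a bare Cartan splitting. The genuine gap is in your treatment of the critical levels, and it stems from using the wrong dimension count. You ask that the entire image $f(V)$, of real dimension $2p$, avoid the $k$-dimensional stable manifold $W^s(\x_0)$, which produces a borderline case $2p+k=2n$ and then forces you to invent a ``holomorphic cap'' surgery that you cannot actually construct and that would have to be reconciled with holomorphicity and with condition (iv). But nothing requires the \emph{interior} of the image to miss the stable manifold; only the boundary sphere $f(\partial V)$, of real dimension $2p-1$, needs to pass through the critical slab. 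The relevant inequality is therefore $(2p-1)+k<2n$, which is exactly equivalent to $k\le 2(n-p)$, so general position on the boundary already clears the stable manifold with room to spare at the extremal index $k=2(n-p)$, and no surgery is needed. Your ``saturated case'' is an artifact of the wrong count.

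Even granting the transversality, your proposed crossing mechanism---``a gradient-flow isotopy pushes the image smoothly through the critical level''---does not respect holomorphicity: composing $f$ with a smooth flow of $X$ destroys the holomorphic structure of the map, which is the whole difficulty. The mechanism actually used (and recalled in Section 3 of the present paper, after the statement) is to build a new $q$-convex function $\tau$, equal to $\rho+t_1$ away from the critical chart, whose sublevel set $\Omega_c=\{\tau\le c\}$ engulfs the core disc $E$ of the handle and has \emph{no critical values} in an interval $(0,2c_0]$. Once $f(\partial V)$ has been pushed into the layer $X_{-t_0<\rho<-t_0/2}$ and off the core $E$, one switches to $\tau$ and applies the noncritical lifting to carry the boundary past the $\rho$-level of $\x_0$, then switches back to $\rho$; the holomorphic category is never left. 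Finally, a small slip: the double-point locus of $f_1$ in $V$ has expected real codimension $2(n-p)$, not $2(n-2p)$, though the conclusions for $2p<n$ (empty) and $2p=n$ (discrete) are as you state.
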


A few comments are in order, especially since we shall use not just the result itself,
but also some of the key steps in the proof. 

Recall that the number $r=n-q+1\le n$, appearing in Conditions (a) and (b), is a pointwise 
lower bound on the number of positive eigenvalues of the Levi form of $\rho$. 

In the {\em noncritical case}, i.e., when $\rho$ has no critical values in 
$[a,b]$, Theorem \ref{th:lifting} holds under the condition that
$r>p$. More precisely, the Levi form of $\rho$ must have at least 
$p$ positive eigenvalues in directions tangential to the level sets
of $\rho$; the radial direction is irrelevant in this problem. 
The construction of a new map $f_1$ with boundary in $X_{\gamma <\rho<b}$
is achieved in finitely many steps by successively lifting small portions 
of the boundary of the image of $V$ in $X$ to higher level sets of 
$\rho$ (see Condition (ii) in Theorem \ref{th:lifting}), 
paying attention to remain within $X_{\rho<b}$ 
(Condition (i)) and not to drop very much anywhere (Condition (iv)).
Every step of the deformation is first carried out in a local chart on $X$
by pushing the image in a suitable $p$-dimensional direction, tangential to the 
level set of $\rho$, on which the Levi form of $\rho$ is strictly positive.
Special holomorphic peaking functions constructed by Dor \cite{Dor1995}
are used for this purpose. The deformation is globalized by the method of
gluing holomorphic sprays, developed in \cite{DF2007}. 

The second part of Condition (b) is used in the following way.
When trying to push the (image in $X$ of the) boundary of $V$ across a critical 
point of index $k$ of $\rho$, we must be able to ensure that 
$f(\partial V)$ (which is of real dimension $2p-1$) avoids the real 
$k$-dimensional stable manifold of the critical point.
By a general position argument this is possible if $(2p-1)+k<2n$,
which is equivalent to $k\le 2(n-p)$. 

Condition (a), that $n-q+1\ge 2p$, is equivalent to $n+q-1= 2n-(n-q+1)\le 2(n-p)$. 
Since Morse indices of a $q$-convex function are $\le n+q-1$, this implies Condition (b).

The last statement, on the existence of proper maps, follows from the first one 
by a standard inductive procedure. By combining the general position argument
and approximating sufficiently well at every step, we can also ensure to obtain in the limit 
a proper holomorphic embedding (if $2p<n$) or immersion (if $2p=n$). All this is very 
standard and well known at least since Whitney's classical work on immersions and embeddings. 
In view of \cite[Theorem 1.2]{DF2008}, the analogous result also holds 
if the initial map $f_0\colon V\to X$ is merely continuous on $V$ and holomorphic 
on $\mathring V=V\setminus\partial V$.

%
%
%

The reader will need a certain amount of familiarity with the proof of Theorem \ref{th:lifting}.
To this end,  we recall in some detail the geometry of critical points 
of $q$-convex functions. Our main source are Sections 2 and 3 in \cite{DF2010} where the reader
can find further details. (This is also available in Sections 3.9--3.10
of the monograph \cite{FF:book}.)

By Sard's lemma we may assume that $a$ and $b$ are regular values  of $\rho$. 
After a small deformation of $\rho$ we may  assume that its 
critical points in $X_{a<\rho<b}$ lie on different level sets. 
Let $\x_0$ be any critical point and $k_0$ 
its Morse index; hence $k_0\le n+q-1$. Set $s=q-1$, so $r+s=n$.
By \cite[Lemma 2.1, p.\ 9]{DF2010} (or \cite[Lemma 3.9.4, p.\ 91]{FF:book}) there exist
\begin{itemize}
\item[\rm (i)]  a holomorphic coordinate map $z=(\xi,w) \colon U\to \C^r\times \C^s=\C^n$ 
on an open neighborhood $U\subset X$ of $\x_0$, with $z(\x_0)=0$,
\item[\rm (ii)] an $\R$-linear change of coordinates 
$\psi(z)=\psi(\xi,w)=(\xi + l(w),g(w))$ on $\C^r\times\C^s$, 
\item[(iii)] integers $k\in\{0,\ldots,r\}$ and $m\in\{0,1,\ldots,2s\}$ with $k+m=k_0$, 
and
\item[(iv)] a quadratic $q$-convex function $\C^n\cong \C^r\times \R^{2s}\to\R$ of the form 
\begin{equation}
\label{normal-form}
	\wt\rho(\zeta,u) = - |x'|^2  -|u'|^2 + |x''|^2 + 
					|u''|^2 + \sum_{j=1}^r \lambda_j y_j^2, 
\end{equation}	
where $\zeta=(\zeta',\zeta'') \in \C^k\times \C^{r-k}$, $\zeta'=x'+\I y'\in \C^k$, 
$\zeta''=x''+\I y''\in \C^{r-k}$, $\lambda_j>1$ for $j=1,\ldots,k$, 
$\lambda_j\ge 1$ for $j=k+1,\ldots,r$, and $u=(u',u'') \in \R^m\times \R^{2s-m}$,
\end{itemize} 
such that, setting 
\[
	\phi(\x)=  \psi(z(\x)) = (\zeta(\x),u(\x)) \in \C^n,\quad \x\in U,
\]
we have
\begin{equation}\label{eq:rho}
	\rho(\x) = \rho(\x_0) + \wt\rho(\phi(\x)) + o(|\phi(\x)|^2),\quad \x\in U. 
\end{equation}

The function $\wt\rho$ (\ref{normal-form}) has a critical point of Morse index $k_0$ at 
the origin (due to the term $- |x'|^2  -|u'|^2$) and no other critical points.
For every fixed $u\in\R^{2s}$ the function $\C^r\ni \zeta \mapsto \wt\rho(\zeta,u)$
is strongly plurisubharmonic, so $\wt \rho$ is $q$-convex on $\C^r\times\C^s=\C^n$. 
Since the $\R$-linear map $\psi$ preserves the foliation $u=const$ and 
is $\C$-linear on each leaf $\C^r\times\{u\}$, 
the function $\wt\rho\circ\psi$ is also $q$-convex on $\C^n$. 

By a small deformation of $\rho$ in a neighborhood of the critical point 
$\x_0$ we may assume that the remainder term in (\ref{eq:rho}) vanishes 
(cf.\ \cite[Lemma 2.1, p.\ 9]{DF2010}); a critical point with this property 
is said to be {\em nice}. Hence we may work in the sequel with $q$-convex functions with nice
critical points.

Assume without loss of generality that $\rho(\x_0)=0$. By shrinking $U$ around 
the point $\x_0$, we may assume that $\phi$ maps $U$ into a polydisc
$P\subset \C^r\times\R^{2s}$ around the origin. Write 
\[
		\wt\rho(x+\mathrm{i} y,u) = -|x'|^2 - |u'|^2 + Q(y,x'',u'')
\]
where $\zeta=x+\mathrm{i}y\in\C^r$, $u\in\R^{2s}$ and
\begin{equation}\label{eq:Q}
	Q(y,x'',u'')= \sum_{j=1}^r \lambda_j y_j^2 + |x''|^2 + |u''|^2. 
\end{equation}
Pick a number $c_0\in (0,1)$ small enough such that $\rho$ has no critical
points other than $\x_0$ in the layer $X_{-c_0\le \rho\le 3c_0} \Subset X_{a<\rho < b}$,
and we have
\[
		\{(x+\mathrm{i} y,u)\in\C^r\times \R^{2s} \colon 
			 |x'|^2 + |u'|^2 \le c_0,\ Q(y,x'',u'') \le 4c_0\} 
			 \subset P.
\]
Consider the set 
\begin{equation}\label{eq:wtE}
	\wt E=\{(x+\mathrm{i} y,u)\in\C^r\times \R^{2s} \colon |x'|^2 + |u'|^2 \le c_0,\ 
	 			 y=0,\ x''=0,\ u''=0\}.
\end{equation}
Its preimage $E=\phi^{-1}(\wt E)\subset U \subset X$ is an embedded real analytic disc 
of dimension $k+m=k_0$ (the Morse index of $\rho$ at $\x_0$) which is attached
to the domain $X_{\rho\le -c_0}$ along the sphere $\S^{k_0-1} \cong \partial E\subset X_{\rho = -c_0}$. 
(In the metric on $U$ inherited from the Euclidean metric in $\C^n$ by the coordinate map 
$\phi:U\to P\subset \C^n$, $E$ is the stable manifold of $\x_0$ for the gradient flow of $\rho$.)

Let $\lambda=\min \{\lambda_1,\ldots,\lambda_k\}>1$. Pick a number $t_0$ with 
$0< t_0 < (1-\frac{1}{\lambda})^2c_0$.

By \cite[Lemma 6.7, p.\ 178]{F2003} (or \cite[Lemma 3.10.1, p.\ 92]{FF:book})
there exists a smooth convex increasing function $h\colon \R \to [0,+\infty)$ 
enjoying  the following properties:
\begin{itemize}
\item[(i)]   $h(t)=0$ for $t\le t_0$, 
\item[(ii)]  for $t\ge c_0$ we have $h(t)=t - t_1$, where $t_1=c_0 - h(c_0) \in (t_0,c_0)$,
\item[(iii)] for $t_0\le t\le c_0$ we have $t-t_1 \le h(t) \le t - t_0$, and
\item[(iv)]  for all $t\in\R$ we have $0\le \dot h(t) \le 1$ and $2t\ddot h(t) + \dot h(t) < \lambda$.
\end{itemize}
The graph of $h$ is shown on Fig.\ 2 in \cite[p.\ 11]{DF2010} and on Fig.\ 3.4 in \cite[p.\ 93]{F2003}.

With $Q$ as in (\ref{eq:Q}), we consider the smooth function 
$\wt \tau\colon \C^n\cong \C^r\times \R^{2s} \to\R$ given by 
\begin{equation}\label{eq:wttau}
		\wt\tau(\zeta,u)= - h\bigl( |x'|^2 +|u'|^2\bigr) +  Q(y,x'',u'').
\end{equation}
Using the properties of $h$, it is easily verified that 
the function $\C^r\ni\zeta \mapsto \wt\tau(\zeta,u)$ 
is strongly plurisubharmonic for every fixed $u\in\R^{2s}$, so
$\wt\tau$ is $q$-convex on $\C^n$. By the same argument as above
(due to the special form of the $\R$-linear map $\psi$), 
the composition $\wt\tau\circ\psi$ has the same property and 
hence is also $q$-convex on $\C^n$. Furthermore, $0$ is a critical value 
with critical locus $\{|x'|^2+|u'|^2\le t_0,\ x''=0,\ y=0,\ u''=0\}$, 
and $\wt \tau$ has no critical values in $(0,+\infty)$. 
(See \cite[Lemma 3.1, p.\ 10]{DF2010} or \cite[Lemma 3.10.1, p.\ 92]{FF:book}
for the details.) 

Let $\phi=\psi\circ z \colon U\to\C^n$ be as above.
Define the  function $\tau \colon X_{\rho\le 3c_0} \to\R$ by 
\begin{equation}\label{eq:tau}	
	\tau= \wt\tau\circ \phi =\wt\tau\circ \psi \circ z \ \; \text{on}\ U\cap X_{\rho\le 3c_0}, 
	\quad  \tau = \rho + t_1 \ \text{on}\  X_{\rho\le 3c_0}\setminus U. 
\end{equation}
It is easily seen that $\tau$ is well defined and enjoys the following properties
(the details can be found in \cite[Lemma 3.1, p.\ 10]{DF2010} or 
\cite[Lemma 3.10.1, p.\ 92]{FF:book}; for the strongly pseudoconvex case 
see also \cite[Lemma 6.7, p.\ 178]{F2003}): 
\begin{itemize}
\item[($\alpha$)] $E\cup X_{\rho\le -c_0}  \subset X_{\tau\le 0} \subset E\cup X_{\rho\le -t_0}$, 
\item[($\beta$)]  $X_{\rho \le c_0} \subset X_{\tau \le 2c_0} \subset X_{\rho< 3c_0}$,
\item[($\gamma$)] $\tau$ is $q$-convex on the set  $X_{-c_0\le \rho\le 3c_0}$, and  
\item[($\delta$)] $\tau$ has no critical values in the interval $(0,2c_0]$. (The level set $X_{\tau=0}$ is critical.) 
\end{itemize}
The critical locus of $\tau$ equals $\{|x'|^2+|u'|^2\le t_0,\ x''=0,\ y=0,\ u''=0\} \subset E$, 
and $\tau$ vanishes on this set. For every $c\in (0,2c_0]$ the sublevel set 
\begin{equation}\label{eq:Omega-c}
		\Omega_c=\{\x\in X_{\rho<3c_0}: \tau(\x) \le c\} \Subset X_{\rho<3c_0}
\end{equation} 
is a smoothly bounded $q$-complete domain. As $\tau$ has no critical values in 
$(0,2c_0]$, these domains are diffeomorphic to each other.
If $c>0$ is chosen sufficiently small such that $c-t_1<0$
(the number $t_1>0$ was defined in property (ii) of $h$), then $\Omega_c$ is obtained
by attaching to the subcritical sublevel set $X_{\rho\le c-t_1}$ 
a handle of index $k_0$ corresponding to the critical point of $\rho$ at $\x_0$.
Hence every domain $\Omega_c$ for $c\in (0,2c_0]$ is diffeomorphic to 
the sublevel set $ X_{\rho \le c}$ of $\rho$ which contains the critical point $\x_0$
in its interior (since $c>0$).  The inclusion $E\cup X_{\rho\le c-t_1} \hookrightarrow \Omega_c$
is a homotopy equivalence since  $E\cup X_{\rho\le c-t_1}$ is a strong deformation retract of $\Omega_c$.
From the second equation in (\ref{eq:tau}) we also see that
\[
		\Omega_c \setminus U = X_{\rho \le c-t_1}\setminus U.
\]
The sets $\Omega_c$ are illustrated on Fig.\ \ref{tau}
which is reproduced from Fig.\ 1 in \cite[p.\ 10]{DF2010} 
and also Fig.\ 3.5 in \cite[p.\ 94]{FF:book}.

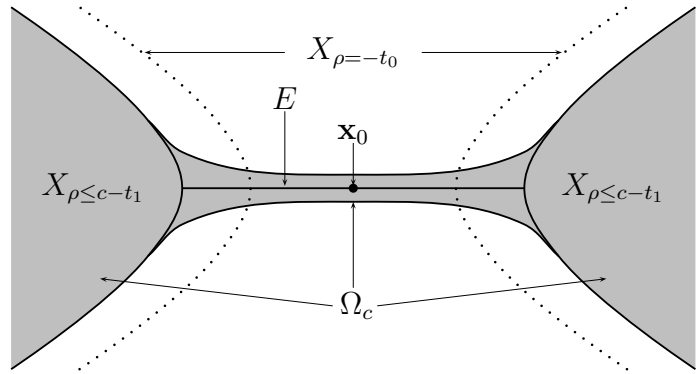
\begin{figure}[ht]
\psset{unit=0.6cm, xunit=1.5, linewidth=0.7pt} 
\begin{pspicture}(-4.5,-4.3)(4.5,4.3)

%
%
%
%
\pscustom[fillstyle=solid,fillcolor=lightgray,linestyle=none]  
{
\pscurve(-3,-1.5)(-2.5,-0.8)(0,-0.3)(2.5,-0.8)(3,-1.5) 
\psline[linestyle=dashed,linewidth=0.2pt](3,-1.5)(3,1.5)
\pscurve[liftpen=1](3,1.5)(2.5,0.8)(0,0.3)(-2.5,0.8)(-3,1.5)
\psline[linestyle=dashed,linewidth=0.2pt](-3,1.5)(-3,-1.5)
}

\pscustom[fillstyle=solid,fillcolor=lightgray]
{\pscurve[liftpen=1](5,4)(3,1.5)(2.5,0)(3,-1.5)(5,-4)          
}

\pscustom[fillstyle=solid,fillcolor=lightgray]
{
\pscurve[liftpen=1](-5,4)(-3,1.5)(-2.5,0)(-3,-1.5)(-5,-4)      
}

%
%
%
\psline(-2.5,0)(2.5,0)                                        
\psecurve(5,4)(3,1.5)(2.5,0.8)(0,0.3)(-2.5,0.8)(-3,1.5)(-5,4) 
\psecurve(5,-4)(3,-1.5)(2.5,-0.8)(0,-0.3)(-2.5,-0.8)(-3,-1.5)(-5,-4)  

%
%
%
\pscurve[linestyle=dotted,linewidth=1pt](4,4)(2,1.5)(1.5,0)(2,-1.5)(4,-4)             \pscurve[linestyle=dotted,linewidth=1pt](-4,4)(-2,1.5)(-1.5,0)(-2,-1.5)(-4,-4)            

\rput(0,3){$X_{\rho=-t_0}$}
\psline[linewidth=0.2pt]{->}(1,3)(3.05,3)
\psline[linewidth=0.2pt]{->}(-1,3)(-3.05,3)

%
%
%

\psline[linewidth=0.2pt]{->}(0,-2.2)(0,-0.3)
\psline[linewidth=0.2pt]{<-}(-3.7,-2)(-0.3,-2.6)
\psline[linewidth=0.2pt]{->}(0.35,-2.6)(3.7,-2)
\rput(0.05,-2.6){$\Omega_c$}

\psline[linewidth=0.2pt]{->}(-1,1.7)(-1,0.05)
\rput(-1,2){$E$}

\psdot(0,0)
\rput(0,1.2){$\x_0$}
\psline[linewidth=0.2pt]{->}(0,1)(0,0.05)

\rput(3.8,0){$X_{\rho \le c-t_1}$}
\rput(-3.8,0){$X_{\rho \le c-t_1}$}

\end{pspicture}
\caption{The set $\Omega_c=X_{\tau \le c}$}
\label{tau}
\end{figure}

Another observation will be crucial for our purposes. 
On the set $U\cap \{|x'|^2 +|u'|^2\le t_0\}$ we have 
$h(x'(\x),u'(\x))=0$, and hence $\tau(\x)=Q(y(\x),x''(\x),u''(\x))$. Therefore
\[
	 \Omega_c\cap U\cap \{|x'|^2 +|u'|^2\le t_0\} = U\cap \{|x'|^2 +|u'|^2\le t_0,\ Q\le c\}
\]
is a tube around $E$ which decreases to the disc $E\cap \{|x'|^2 +|u'|^2\le t_0\}$
as $c\searrow0$. Note that  $\Sigma :=\{z\in \C^n\colon y=0,\ x''=0,\ u''=0\}$
is the $\R$-linear subspace containing the disc $\wt E$ (\ref{eq:wtE}).
If $\Lambda\subset \C^n$ is any $\R$-linear subspace of dimension
$\dim_\R \Lambda=2n-\dim_\R \Sigma = 2n-k_0$ intersecting 
$\Sigma$ transversely at the origin, then for small enough $c\in (0,2c_0)$
the intersection $\Lambda\cap \{Q \le c\}$ is an ellipsoid in $\Lambda$.
If $2n-k_0=2p$ is even, we can pick $\Lambda$ such that
$\wt \Lambda=\psi^{-1}(\Lambda) \subset\C^n$ is a $p$-dimensional 
$\C$-linear subspace of $\C^n$. The set  
\[ 		
		V_c := \Lambda \cap \{Q \le c\} 
\] 
for small $c>0$ is an ellipsoid,  and $\wt V_c:=\psi^{-1}(V_c)$ is an ellipsoid in $\wt \Lambda \cong\C^p$.  
The set 
\begin{equation}\label{eq:Zc}
		Z_c := \{\x\in\Omega_c \cap U \colon \phi(\x) \in V_c\}
		    = \{\x\in\Omega_c \cap U \colon z(\x) \in \wt V_c \}
\end{equation}
is then a closed complex submanifold of $\Omega_c=X_{\tau\le c}$, with boundary
$\partial Z_c \subset \partial \Omega_c = X_{\tau= c}$, which is 
biholomorphic to an ellipsoid in $\wt\Lambda \cong \C^p$ 
via the holomorphic coordinate map $z \colon U\to\C^n$. 
The submanifold $Z_c$ is a holomorphic cross-section of the handle that was attached to
the sublevel set $X_{\rho\le c-t_1}$ in order to get the domain $\Omega_c$.

It is now possible to explain how to lift the boundary of the image variety $f(V)\subset X$ 
across a critical level of the $q$-convex function $\rho$. We shall do this in 
the following section in the context of proving Theorem \ref{th:technical}.


\section{Proof of Theorems \ref{th:Hodge1} and \ref{th:Hodge2}}  \label{sec:proofs}
We first explain how Theorem \ref{th:lifting} and its proof
(see Sect.\ \ref{sec:lifting} above) imply the following result.
Theorems \ref{th:Hodge1} and \ref{th:Hodge2} will follow easily from Theorem \ref{th:technical}. 

%
%
%
%
\begin{theorem}\label{th:technical}
(Hypotheses as in Theorem \ref{th:lifting}.)  Assume that for some 
$\gamma_0 \in (a,b)$ the function $\rho$ has no critical values on $[a,\gamma_0]$,
and every element of the homology group $H_{2p}(X_{\rho<\gamma_0},X_{a<\rho<\gamma_0};\Z)$ 
can  be represented by a finite analytic cycle $\sum_i n_i Z_i$,  where every
$Z_i$ is a holomorphic image of a strongly pseudoconvex Stein domain $V_i$. 
Then for any $\gamma_1 \in (\gamma_0,b)$,  each element of the homology group 
$H_{2p}(X_{\rho<b},X_{\gamma_1 <\rho<b};\Z)$ is also represented by a finite analytic cycle of 
the same form. If in addition the elements of $H_{2p}(X_{\rho<\gamma_0},X_{a<\rho<\gamma_0};\Z)$
are representable by cycles as above in which every domain $V_i$ is the ball $\B^p\subset\C^p$,
then the same is true for  the group $H_{2p}(X_{\rho<b},X_{\gamma_1 <\rho<b};\Z)$.
\end{theorem}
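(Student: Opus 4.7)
\emph{Setup.} By compactness of $X_{a\le\rho\le b}$ and the Morse property, the critical values of $\rho$ in $(\gamma_0,b)$ form a finite set $c_1<\cdots<c_N$. I would induct, pairing each $c_i$ with a regular value $s_i\in(c_i,c_{i+1})$ (where $c_{N+1}=b$) and producing at each step an analytic cycle of the required form representing every class in $H_{2p}(X_{\rho\le s_i},X_{\rho=s_i};\Z)$; set $s_0=\gamma_0$. The base case is the hypothesis of the theorem. The target group $H_{2p}(X_{\rho<b},X_{\gamma_1<\rho<b};\Z)$ is reached by choosing any $s_i\in(\gamma_1,b)$: the collar isomorphism (Lemma \ref{lem:collar}) together with the gradient-flow deformation retraction of $X_{\rho<b}$ onto $X_{\rho\le s_i}$ across the noncritical strip $(s_i,b)$ identifies the two groups, matching cycles with cycles.

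\emph{Noncritical step: from $s_{i-1}$ to just below $c_i$.} Apply the noncritical case of Theorem \ref{th:lifting} to each holomorphic component map $f_j\colon V_j\to X$ defining the current cycle. Condition (a) is satisfied with equality, since $r=n-q+1=2p$, so the conclusion yields a holomorphic $\tilde f_j$ homotopic to $f_j$ through maps into $X_{\rho<c_i}$, with $\tilde f_j(\partial V_j)$ pushed arbitrarily close to but below $\rho=c_i$. The source $V_j$ is unchanged — in particular balls remain balls — and the homotopy, taking place entirely in the sublevel set being approached, shows that the new cycle represents the class corresponding to the old one under the noncritical diffeomorphism of sublevel sets.

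\emph{Critical step: crossing $c_i$.} Invoke the local model of Section \ref{sec:lifting} at the critical point $\x_0$ of Morse index $k_0\le n+q-1=2(n-p)$: construct $\tau$ as in (\ref{eq:tau}) and the $q$-complete sublevel set $\Omega_c=X_{\tau\le c}$ as in (\ref{eq:Omega-c}), which is diffeomorphic to $X_{\rho\le s_i}$ and is obtained from the subcritical domain (which, after the noncritical step, contains each $\tilde f_j(V_j)$) by attaching a handle of index $k_0$. Apply Theorem \ref{th:transv} to each $\tilde f_j$ so that it becomes transverse to the attaching sphere $\phi(\S^{k_0-1})$; the dimension count $2p+(k_0-1)<2n$ forces the intersection to be empty, and by thinning the handle per Remark \ref{rem:change} one disjointens $\tilde f_j(\partial V_j)$ from the full attaching region $\im\phi$. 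Proposition \ref{cor:change} with $j=2p$ (valid since $2p\le 2n-k_0$) then shows that the prolonged old cycles generate $H_{2p}(\Omega_c,\partial\Omega_c;\Z)$ in the subcritical range $k_0<n+q-1$, while in the critical range $k_0=n+q-1$ one additional generator may be required. This extra generator is provided by the holomorphic cross-section $Z_c$ of (\ref{eq:Zc}), a closed complex $p$-dimensional submanifold of $\Omega_c$ biholomorphic to an ellipsoid in the $\C$-linear slice $\wt\Lambda\cong\C^p$ transverse to $E$ at $\x_0$. Under the ball-representability hypothesis, $Z_c$ is replaced by the intersection of $\Omega_c$ with a small round ball in $\wt\Lambda$ centered at $\x_0$: as noted in the introduction, any sufficiently small strongly pseudoconvex Stein domain in $\wt\Lambda$ containing $E\cap\wt\Lambda$ represents the same relative homology class.

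\emph{Main obstacle.} The crux is the critical step. One must simultaneously (a) make the previously constructed holomorphic cycles transverse to, and then disjoint from, the attaching region of the handle without sacrificing holomorphicity — this is exactly what Theorem \ref{th:transv} provides, turning a topological general-position argument into a holomorphic one — and (b) realize the possible extra relative-disc generator of Proposition \ref{cor:change} by an honest holomorphic cycle. Point (b) succeeds precisely because the complex $p$-dimensional slice $\wt\Lambda$ is simultaneously transverse to the real-analytic stable disc $E$ at $\x_0$ and of the correct real dimension $2p$, so $\Omega_c\cap\wt\Lambda$ is a genuine holomorphic fibre of the handle that represents the missing relative class. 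Iterating the two steps over $i=1,\ldots,N$ and, if necessary, performing a final noncritical lift of the boundaries into the collar $X_{\gamma_1<\rho<b}$ then closes the induction and proves the theorem.
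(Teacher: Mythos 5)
Your proposal follows the paper's overall plan: finite induction over critical values, the noncritical step via Theorem \ref{th:lifting}, the local model with $\tau$ and $\Omega_c$, the topological identification via Proposition \ref{cor:change}, and the realization of the possible new generator by the ellipsoid $Z_c$ shrunk to a ball. However, there is a genuine gap in your critical step: you construct $\tau$ and $\Omega_c$ and then invoke Proposition \ref{cor:change} to conclude that the prolonged cycles (plus possibly $Z_c$) generate $H_{2p}(\Omega_c,\partial\Omega_c;\Z)$, but you never actually move the holomorphic cycles. Prolongation by inclusion is a homological statement; the cycles $\tilde f_j(V_j)$ still have their boundaries in the strip $X_{-t_0<\rho<-t_0/2}$, i.e.\ strictly below the critical level of $\rho$. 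When you then return to your noncritical step to climb toward $c_{i+1}$, you explicitly apply only the \emph{noncritical} case of Theorem \ref{th:lifting} with $\rho$ --- but $\rho$ has the critical value $c_i$ in between, so that case does not apply and the boundaries cannot be lifted. The paper's key analytic move, which is absent in your write-up, is to apply the noncritical case of Theorem \ref{th:lifting} \emph{with the auxiliary function $\tau$}: by property $(\delta)$, $\tau$ has no critical values in $(0,2c_0]$, so the boundaries of $\tilde f_j(V_j)$ (once known to lie in $X_{c<\tau<c_1}$) can be pushed into the layer $X_{c_0<\rho<3c_0}$ above the critical level, after which one switches back to $\rho$. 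Without this $\tau$-lift the induction cannot advance past a critical point.

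A secondary imprecision concerns your transversality argument. You make the image $\tilde f_j(V_j)$ transverse to, hence disjoint from, the attaching sphere $\phi(\S^{k_0-1})$, and then appeal to Remark \ref{rem:change} to keep $\tilde f_j(\partial V_j)$ off $\im\phi$. But what the argument actually requires is that the boundary $\tilde f_j(\partial V_j)$ (real dimension $2p-1$) avoid the entire stable disc $E$ (real dimension $k_0$): since $\{\tau=0\}\cap X_{\rho\ge -t_0}=E\cap X_{\rho\ge -t_0}$, the compact set $\tilde f_j(\partial V_j)$ then satisfies $\tau>c$ for some $c>0$, and only this lets you view $\tilde f_j(V_j)$ as a relative cycle in $(X_{\tau<c_1},X_{c<\tau<c_1})$ to which the $\tau$-lift applies. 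Avoiding the attaching sphere, or even the full attaching region on the boundary of a sublevel set, does not by itself keep $\tilde f_j(\partial V_j)$ away from the interior part of $E$. The dimension count $(2p-1)+k_0<2n$, i.e.\ $k_0\le n+q-1$, is exactly what makes the correct general-position statement work.
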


\begin{proof}
Let us first consider the {\em noncritical case} when $\rho$ has no critical values in 
$[a,b]$. As there is no change of topology of the sublevel sets 
of $\rho$, the relative homology groups 
$H_{j}(X_{\rho\le t},X_{\rho=t};\Z)$ are isomorphic to each other for all
$t\in [a,b]$ and all $j\in \Z_+$. Furthermore, for any pair of numbers
$\alpha,\beta$ with $a\le \alpha <\beta \le b$, the set $X_{\alpha\le \rho\le \beta}$
is an interior collar around the boundary $X_{\rho=\beta}$ of the manifold 
with boundary $X_{\rho\le \beta}$, and also an exterior collar of 
$X_{\rho=\alpha} = \partial (X_{\rho\le \alpha})$. 
By Lemma \ref{lem:collar} and excision we thus get an isomorphism
\begin{equation}\label{eq:iso-k}
		H_{j}(X_{\rho<\gamma_0},X_{a<\rho<\gamma_0};\Z) 
		\cong H_{j}(X_{\rho<b},X_{\gamma_1 <\rho<b};\Z)
\end{equation}
for every $\gamma_1 \in(\gamma_0,b)$ and $j\in\Z_+$. 
Assume that $V$ is a strongly pseudoconvex Stein domain of dimension $p$
and $f_0:V\to X$ is a holomorphic map with range in $X_{\rho<\gamma_0}$
such that $f_0(\partial V)\subset  X_{a<\rho<\gamma_0}$;
then $f_0(V)$ represents an element of the group 
$H_{2p}(X_{\rho<\gamma_0},X_{a<\rho<\gamma_0};\Z)$.
Assuming that $p<n-q+1$, Theorem \ref{th:lifting}  tells us that
$f_0$ is homotopic to another holomorphic map $f_1:V\to X$ whose image
$f_1(V)$ represents the same element in
$H_{2p}(X_{\rho<b},X_{\gamma_1<\rho<b};\Z)$ under the isomorphism (\ref{eq:iso-k}).   
This holds in particular if $2p=n-q+1$, the case of interest to us.

It follows that we can lift an analytic $p$-cycle 
$\sum n_i f_i(V_i)$ in $(X_{\rho<\gamma_0},X_{a<\rho<\gamma_0})$, with $p<n-q+1$, to an analytic $p$-cycle
$\sum n_i \tilde f_i(V_i)$ in $(X_{\rho<b},X_{\gamma_1<\rho<b})$ such that these two cycles 
represent the same homology class under the isomorphism (\ref{eq:iso-k})
with $j=2p$. In the new cycle we use the same domains $V_i$ and weights $n_i$,
only the maps change.

This completes the analysis of the noncritical case.

Assume now that  $\rho$ has critical points in $X_{a\le \rho \le b}$.
We may assume that $a$ and $b$ are regular values, so $\rho$ (being Morse) has 
only finitely many critical points in $X_{a <\rho < b}$.
By a small deformation of $\rho$ we may assume that these points lie on
different level sets of $\rho$ 
and each of them is nice, in the sense that $\rho$ can be represented in the normal form 
(\ref{eq:rho}) without the remainder term. 
It suffices to prove Theorem \ref{th:technical} in the case when $\rho$ has 
only one critical point in $X_{a<\rho<b}$; the general case then follows by a finite induction.

Thus, let $\x_0\in X_{a<\rho <b}$ be the unique (nice) critical point of $\rho$ in 
$X_{a\le \rho\le b}$. We may assume that $\rho(\x_0)=0$. 
By the inductive hypothesis there exists a number $\gamma_0\in (a,0)$ such that
every homology class in $H_{2p}(X_{\rho<\gamma_0},X_{a<\rho<\gamma_0};\Z)$ is represented by an
analytic  cycle consisting of images of strongly pseudoconvex Stein domains. 

We shall use the notation from the previous section;
this pertains in particular to the positive numbers $c_0,t_0,t_1>0$, 
the coordinate map $\phi=\psi\circ z:U\to P\subset \C^n$ on a neighborhood $U\subset X$ of $\x_0$,
the core $E=\phi^{-1}(\wt E)$ of the handle (\ref{eq:wtE}) at $\x_0$,
the function $\tau$ (\ref{eq:tau}), its sublevel sets $\Omega_c=X_{\tau\le c}$ 
(\ref{eq:Omega-c}), and the ellipsoids $Z_c \subset \Omega_c$ (\ref{eq:Zc}).
We may assume that $\gamma_0 < -t_0<0$ and $0<3c_0<b$. 

It suffices to deal separately with each of the subvarieties in the given cycle. 
Let $f_0(V) \subset X_{\rho<\gamma_0}$ be such a subvariety, with $f_0(\partial V)\subset X_{a<\rho<\gamma_0}$.
By the noncritical case, applied on $[a,-t_0/2]$, we can represent the homology 
class $[f_0(V)]\in H_{2p}(X_{\rho<\gamma_0},X_{a<\rho<\gamma_0};\Z)$ by a subvariety 
$\tilde f_0(V) \subset X_{\rho<-t_0/2}$, with 
$\tilde f_0(\partial V) \subset X_{-t_0<\rho<-t_0/2}$, such that 
\[
	[\tilde f_0(V)] \in H_{2p}(X_{\rho< -t_0/2},X_{-t_0<\rho<-t_0/2};\Z)\cong  
	 H_{2p}(X_{\rho<\gamma_0},X_{a<\rho<\gamma_0};\Z).
\]
As $2p\le n-q+1$, the general position argument (cf.\ Theorem \ref{th:transv})
allows us to deform $\tilde f_0$ slightly to ensure that $\tilde f_0(\partial V)\cap E=\emptyset$,
so we have $\tilde f_0(\partial V) \subset X_{-t_0<\rho < -t_0/2} \setminus E$.

Recall that $\{\tau=0\}\cap  X_{\rho\ge -t_0} = E\cap X_{\rho\ge -t_0}$
(see property ($\alpha$) just below (\ref{eq:tau})).
Hence, for $c>0$ small enough, we have $\tilde f_0(\partial V)\subset X_{\tau>c}$,
so $\tilde f_0(V)$ also determines a homology class in 
$H_{2p}(X_{\tau < c_1},X_{c<\tau < c_1};\Z)$ for some constant $c_1$ satisfying $c<c_1<2c_0$.
The domain $X_{\tau\le c_1}$ is diffeomorphic to $X_{\rho \le c_1}$, i.e., it is
the manifold obtained  by attaching to $X_{\rho\le -t_0}$ a handle of index $k_0$,
representing the change of topology at the critical point $\x_0$. The homology class 
$[\tilde f_0(V)] \in H_{2p}(X_{\tau < c_1},X_{c<\tau < c_1};\Z)$ is simply the 
prolongation of the same class in $H_{2p}(X_{\rho<-t_0/2},X_{-t_0<\rho<-t_0/2};\Z)$
in the sense of Proposition \ref{cor:change}. 

By using the noncritical case with the function $\tau$ (\ref{eq:tau}),
we lift the boundary of the variety $\tilde f_0(V)$ into the layer  $X_{c_0<\rho<3c_0}$ above the 
critical level $\rho(\x_0)=0$; see conditions ($\alpha$) and ($\beta$) following (\ref{eq:tau}).
Next, we use the noncritical case, this time again with the function $\rho$,
to lift the boundary of the variety, obtained in the previous substep,
closer towards the level set $X_{\rho=b}$. The result of these two modifications 
is that we replaced $\tilde f_0(V)$ by a homologous analytic cycle
$f_1(V) \subset X_{\rho<b}$ satisfying $f_1(\partial V)\subset X_{\gamma_1 <\rho <b}$,
where $\gamma_1$ is any number with $3c_0< \gamma_1 <b$.

This shows that all analytic $p$-cycles with $2p\le n-q+1$, coming from below the critical level of $\rho$ 
at $\x_0$, survive the passage of the critical level $\rho=\rho(\x_0)$ with the same
normalizing strongly pseudoconvex domains.

In the subcritical case $2p<n-q+1$ this completes the proof since
the relative homology group $H_{2p}(X_{\rho<t},X_{\rho=t};\Z)$ does not change 
as $t$ passes the value $\rho(\x_0)$. The same is true if $2p=n-q+1$
and the Morse index of $\x_0$ is $<n+q-1$.

If $2p=n-q+1$, a critical point $\x_0$ of maximal Morse index
$n+q-1$ of $\rho$ may give birth to an additional generator of the relative homology 
(cf.\ Proposition \ref{cor:change}). As we have seen in Sect.\ \ref{sec:lifting}, 
this new generator can be represented by a properly embedded
complex ellipsoid $Z_c \subset X_{\tau\le c}$ with $\partial Z_c\subset X_{\tau=c}$ (\ref{eq:Zc}).

We now explain how to replace this ellipsoid $Z_c$ by a ball in the subsequent construction.

Pick a closed domain $B\subset Z_c$, biholomorphic to the closed ball 
$\overline\B^p\subset\C^p$, which contains the (unique) intersection point 
$Z_c\cap E$ in its relative interior. We have $B=g(\bar \B^p)$ for a holomorphic
embedding $g: \bar \B^p\to X$. There is a constant $c'\in (0,c)$ 
such that $Z_c\cap X_{\tau \le c'}$ is contained in the relative interior of $B$. 
Since the function $\tau$  (\ref{eq:tau}) has no critical values in the interval $(0,2c_0]$
by property ($\delta$), we see by the same argument as above 
that $Z_c$ and $B$ determine the same element in the relative homology
group $H_{2p}(X_{\tau<2c_0},X_{c'<\tau<2c_0};\Z)$.
Applying the noncritical case, first with $\tau$ (to push the boundary of 
the ball $B$ into a $\rho$-layer above the critical level) and subsequently with $\rho$,
we can replace this new component by another holomorphic immersion
(embedding if $2p<n$) of  $(\bar \B^p,\partial \B^p)$ into 
$(X_{\rho<b},X_{\gamma_1<\rho<b})$.

This completes the proof of Theorem \ref{th:technical}.
\end{proof}

%
%
%
%
\begin{proof}[Proof of Theorem \ref{th:Hodge2}]
Choose a $q$-convex Morse function $\rho$ on a neighborhood 
of $M$ in $X$ such that $M=\{\rho\le 0\}$ and $d\rho\ne 0$ on $\partial M$.
Pick numbers $a<\gamma_0< \gamma_1 <b=0$ such that
$\gamma_0<\min_M\rho < \gamma_1$, and $\gamma_1$ 
is close enough to $0$ so that $\rho$  has no critical values on $[\gamma_1,0]$.
Then $X_{\gamma_1\le \rho\le 0}$ is a collar around $\partial M=X_{\rho=0}$,
and by choosing $\gamma_1$ sufficiently close to $0$ we may assume that it is 
contained in the given collar $A$.  Since $X_{\rho<\gamma_0}=\emptyset$, the hypotheses of  
Theorem \ref{th:technical} are trivially satisfied.  The result now follows directly from 
Theorem \ref{th:technical}.
\end{proof}

%
%
%
%
\begin{proof}[Proof of Theorem \ref{th:Hodge1}]
Let $M$ be a compact $q$-complete domain in a complex manifold $X$.
Let $A\subset X$ be an interior collar and $B\subset X$ be an exterior collar around $\partial M$,
and set $\wt M=M\cup B$. For any $j\in\N$ we have natural isomorphisms 
\[
	H_j(M,\partial M;\Z)\cong H_j(M,A;\Z) \cong H_j(\wt M,B;\Z).
\]
Let $2p=n-q+1$. By Theorem \ref{th:Hodge2} every homology class $z\in H_{2p}(M,\partial M;\Z)$
is represented by an analytic cycle $Z=\sum_i n_i Z_i$,
where each $Z_i=f_i(\bar \B^p)$ is a holomorphic image of the ball 
$\bar \B^p \subset\C^p$ and  $\partial Z_i = f_i(\partial \B^p) \subset A$.
By the noncritical case of Theorem \ref{th:technical} we can replace $Z$
by a homologous analytic cycle $\wt Z=\sum_i n_i \wt Z_i$ in $(\wt M,B)$,
where $\wt Z_i  =\tilde f_i(\bar \B^p)$ for some holomorphic map 
$\tilde f_i\colon \bar \B^p \to \wt M$ with $\tilde f_i(\partial \B^p)\subset B \setminus \partial M$. 
In view of Theorem \ref{th:transv} we can assume that each $\tilde f_i$ is transverse
to $\partial M$. By Proposition \ref{prop:extcollar}, 
a suitable integral linear combination of the intersections of the components of 
$\wt Z$ with $M$ gives an analytic cycle in $(M,\partial M)$ 
which determines the homology class $z\in H_{2p}(M,\partial M;\Z)$. 
The cycle in $(M,\partial M)$ obtained in this way is of the form 
$\sum_{i,j} n_{i,j} \tilde f_i(W_{i,j})$, where $W_{i,j}$ are smoothly bounded 
connected domains in $\B^p$.
\end{proof}


\def\lone{\lim{}^{\!\!1\,}}
\newcommand{\limone}[1]{\lim_{#1}{}^{\!\!1\,}}

\section{Proof of Theorem \ref{th:Hodge3}} \label{sec:openMfd}

Let $X$ be a $q$-complete manifold without boundary, and let $\rho\colon X\to\R$ be a $q$-convex Morse
exhaustion function. We choose an exhaustion $M_1\subset M_2\subset \dots\subset\cup_{j=1}^\infty M_j=X$
where each $M_j$ is a regular sublevel set of $\rho$ and there is at most one critical point of $\rho$ in
each difference $\mathring{M_j}\setminus M_{j-1}$. Recall the definition $\cH^k(X;G)=\lim_j H^k(M_j;G)$ and
note that this definition does not depend on the particular choice of $\rho$ or the sublevel sets. The
easiest way to see this is to define $\cH^k(X;G)$, in analogy with Atiyah and Hirzebruch \cite{AH1961},
as the inverse limit of groups $H^k(M;G)$ where $M$ ranges over all compact subdomains of $X$. 
A particular exhaustion gives rise to a countable cofinal subset; hence the two inverse limits are isomorphic.

From now on, homology and cohomology will be taken with integer coefficients; we drop coefficients from the notation.

Let $X$ have complex dimension $n$ and let $Z$ be a closed analytic subspace of complex dimension $p=n-k=(n-q+1)/2$.
We assume that the singular subspace $Z_s$ of $Z$ is either empty (i.e. $Z$ is a submanifold) or discrete.
(These are the only cases that we need to consider for our purposes.)
We define the cohomology class dual to $Z$ as follows.
(See Atiyah and Hirzebruch \cite{AH1961}, Section 5, and also Douady \cite{Douady1961}, Section V.A.) As $Z'=Z-Z_s$
is a closed submanifold of $X'=X-Z_s$, it admits a (smooth) tubular neighborhood, say $W'$, in $X'$. There
is a Thom class in $H^{2k}(W',W'-Z')$ induced by the canonical orientation of the complex normal bundle,
and consequently, by excision, the corresponding
class in $H^{2k}(X',X'-Z')$. By restriction, we obtain a class in $H^{2k}(X')$ and hence a class $\zeta'\in\cH^{2k}(X')$.
By Lemma 5.3 of \cite{AH1961}, the natural `restriction' morphism $\cH^{2k}(X)\to\cH^{2k}(X')$ is an isomorphism
and we get an element $\xi\in\cH^{2k}(X)$. The element $\xi$ will be referred to as the cohomology class dual to $Z$.
More generally,
let $\setof{Z_r\,\vert\,r}$ be a countable, locally finite collection of closed analytic subspaces of $X$ whose
singular sets are either empty or discrete. Let $\xi_r\in\cH^{2k}(X)$ denote the cohomology class dual to $Z_r$
and let $n_r$ be integers. Local finiteness ensures that the possibly infinite sum $\sum_rn_r\xi_r$ makes sense
in $\cH^{2k}(X)$. To make this precise, denote the canonical projections
\[	P^i\colon \cH^{2k}(X)\to H^{2k}(M_i).		\]
By local finiteness, $P^i(\xi_r)$ is nontrivial for at most finitely many $r$, and hence $\sum_rn_rP^i(\xi_r)$
is an honest element of $H^{2k}(M_i)$. The sequence $\{\sum_rn_r P^i(\xi_r)\,\vert\,i\}$ then defines an element
of the inverse limit $\lim_iH^{2k}(M_i)=\cH^{2k}(X)$ which we denote by $\sum_rn_r\xi_r$.

We propose the following definition which is a slight generalization of the definition in \cite[Section 5.D]{AH1961}.

\begin{defn}
The cohomology class $\sum_rn_r\xi_r\in\cH^{2k}(X)$ described above is complex analytic, and it is dual to the
complex analytic cycle $\sum_rn_rZ_r$.
\end{defn}

The situation for a compact domain $M$ with boundary is as follows. Let $Z$ be an embedded 
complex submanifold of $M$ with smooth boundary, and assume that $Z$ meets $\partial M$ 
transversely in $\partial Z$. As above, we assume that $Z$ has complex dimension $p=n-k$. 
Thus, the pair $(Z,\partial Z)$ defines a homology class  $[Z,\partial Z]$ in $H_{2n-2k}(M,\partial M)$.
Then the cohomology class $x\in H^{2k}(M)$ that corresponds to $[Z,\partial Z]$ under 
Poincar\'e duality is exactly the restriction of the Thom class of $Z$ in $M$. 
Precisely, let $W$ be a tubular neighborhood of $Z$ in $M$.
By the transversality assumption, $W\cap\partial M$ is a tubular neigborhood of $\partial Z$ in $\partial M$. Thus
there is a Thom class $\tau_Z^W\in H^{2k}(W,W\setminus Z)$ induced by the canonical orientation of the normal bundle. The image of $\tau_Z^W$ under the zig-zag composite
\begin{equation*}
H^{2k}(W,W\setminus Z)\xleftarrow[\cong]{\text{excision}}H^{2k}(M,M\setminus Z)\xrightarrow{\text{restriction}}H^{2k}(M)
\end{equation*}
equals $x$. (See Bredon \cite{Bredon}, page 371.)

\begin{proof}[Proof of Theorem \ref{th:Hodge3}]
Let $\setof{M_i}$ be an exhaustion of $X$ as above. Take $x\in\cH^{2k}(X)$, let $x_i=P^i(x)\in H^{2k}(M_i)$ be the
`restrictions' of $x$ to $M_i$, and let $z_i\in H_{2n-2k}(X,X\setminus M_i)\cong H_{2n-2k}(M_i,\partial M_i)$
be the dual homology classes.

We summarize the consequences of Theorems \ref{th:Hodge2}, \ref{th:Hodge1}, and \ref{th:technical} 
and their proofs. Each homology group $H_{2n-2k}(X,X\setminus M_i)$ has a distinguished set of generators
\[	\{\zeta^i_r\,\vert\,1\le r\le\lambda_i\}	\]
where $0\le\lambda_{i-1}\le\lambda_{i}\le\lambda_{i-1}+1$, and
\[	\iota_i\colon H_{2n-2k}(X,X\setminus M_i)\to H_{2n-2k}(X,X\setminus M_{i-1})		\]
maps each $\zeta^i_r$, for $r\le\lambda_{i-1}$, to $\zeta^{i-1}_r$. If $\lambda_i>\lambda_{i-1}$
(i.e. when passing a critical point of the highest index), then $\iota_i(\zeta^i_{\lambda_i})=0$.
Since $\iota_i(z_i)=z_{i-1}$ for all $i$, we can express
\[	z_i=\sum_{r=1}^{\lambda_i}n^i_r\zeta^i_r	\]
where $n^i_r=n^{i-1}_r$ for all $r\le\lambda_{i-1}$. In particular, it makes sense to define
\[		n_r=\lim_{i\to\infty}n^i_r.			\]

We describe and make use of the geometric representatives of $\zeta^i_r$ constructed above. Assume that $r=\lambda_i$ and
$\zeta^i_{r}\in H_{2n-2k}(X,X\setminus M_i)$ is an additional generator. 
Then $\zeta^i_{r}=[f(V),f(\partial V)]$ where $V=\bar\B^p$ is the ball in $\C^p$ and
$f=f^i\colon(V,\partial V)\to(X,X\setminus M_i)$ is a holomorphic embedding 
(immersion with normal crossings when $q=1$).This gives rise to an inverse sequence of liftings
\[	
	f^j\colon(V,\partial V)\to(X,X \setminus M_j)		
\]
so that $[f^j(V),f^j(\partial V)]=\zeta^j_{r}$ for all $j>i$. In addition, we may arrange that 
$f^j$ and $f^{j-1}$ are arbitrarily close on the compact subset 
$\{v\in V\colon  d(v,bV)\geq 1/j \}$ of $V$. Finally, we may achieve by 
Theorem \ref{th:transv} that each $f^j$ is transverse to $\partial M_j$.

By choosing $f^j$ close enough to $f^{j-1}$ for each $j$, we can make the sequence $\setof{f^j}$ converge uniformly on compact subsets of $\mathring{V}=\B^p$
to a holomorphic embedding $\phi\colon\mathring V \to X$ that is transverse to all 
$\partial M_j$. For each $j\ge i$, the geometric intersection of the image 
$\phi(\mathring V)$ with $M_j$ yields a cycle 
$[\phi(\mathring V)\cap M_j,\phi(\mathring V)\cap\partial M_j]\in H_{2n-2k}(M_j,\partial M_j)$
homologous to that obtained by intersecting $f^j(V)$ with $M_j$. 
That in turn corresponds to $[f^j(V),f^j(\partial V)]=\zeta^j_{r}$ under
the isomorphism $H_{2n-2k}(M_j,\partial M_j)\cong H_{2n-2k}(X,X\setminus M_j)$.

We let $Z_{r}$ denote the image $\phi(\mathring{V})$.
By doing this for each additional generator we obtain our collection of submanifolds 
$\mathcal{Z}=\setof{Z_r\,\vert\,r}$.
This collection is locally finite in $X$ (this is because the new generator which may appear at any of the 
critical points of maximal index does not enter into any of the previous subdomains 
during the subsequent lifting procedure, cf.\ Theorem \ref{th:lifting} and the proof of Theorem \ref{th:technical}), 
and also that the cardinality of $\mathcal{Z}$ is precisely the number of critical points of the highest index. 
Let $\xi_r\in\cH^{2k}(X)$ be the cohomology class dual to $Z_r$.
To complete the proof we need to show that $x=\sum_rn_r\xi_r$.

To this end, fix some $r$ as above and let $W$ be a tubular neighborhood of $Z_r$ in $X$. 
By transversality, we may construct
$W$ so that each intersection $W_j=W\cap M_j$ is either empty or a tubular neighborhood of 
$Z_r\cap M_j$ in $M_j$; the latter for all large enough $j$.
Assume that $W\cap M_j$ is nonempty. Naturality of the Thom class guarantees that
$P^j(\xi_r)\in H^{2k}(M_j)$ is the Poincar\'e dual of
$[Z_r\cap M_j,Z_r\cap\partial M_j]\in H_{2n-2k}(M_j,\partial M_j)$. On the other hand,
$[Z_r\cap M_j,Z_r\cap\partial M_j]$ corresponds to $\zeta^j_r\in H_{2n-2k}(X,X\setminus M_j)$ as explained above.
Consequently, $P^j(\sum_rn_r\xi_r)$ corresponds precisely to $\sum_{r=1}^{\lambda_j}n^j_r\zeta^j_r=z_j$ for all $j$.
This means that $P^j(\sum_rn_r\xi_r)=x_j$ for all $j$ which implies $x=\sum_rn_r\xi_r$, as claimed.

This completes the proof for $q>1$. The straightforward adjustments necessary to prove the case $q=1$ will be
left to the reader.
\end{proof}


\section{Equality of $\cH^k(X;G)$ and $H^k(X;G)$}\label{sec:equality}

Let $X$ be a noncompact smooth manifold without boundary. 
As is well-known in homological algebra (see Milnor \cite{Milnor}), we have the following short exact sequence
\begin{equation}\label{eq:chs}
	0\to\limone{i}H^{k-1}(M_i;G)\to H^k(X;G)\to\lim_i H^k(M_i;G)=\cH^k(X;G)\to 0.
\end{equation}
Here, $\lone$ denotes the first (right) derived functor of the inverse limit. For inverse limits of
abelian groups, it can be described as follows. If $\dots\to A_3\xrightarrow{p_3}A_2\xrightarrow{p_2}A_1$
is an inverse sequence of abelian groups, also called a tower of abelian groups, then the collection of the
`bonding' morphisms $p_i\colon A_i\to A_{i-1}$ gives rise to a morphism $P\colon\prod_{i=1}^\infty A_i\to\prod_{i=1}^\infty A_i$.
The kernel of $1-P$ is the inverse limit $\lim_iA_i$, and the cokernel can be taken as the definition of $\limone{i}A_i$.

We discuss two particular cases.

If $\setof{A_i}$ is a tower of finite-dimensional vector spaces over some field, then $\lone A_i$ is trivial (see Weibel \cite{Weibel},
Exercise 3.5.2). Thus if $G$ is a field, $\limone{i}H^{k-1}(M_i;G)$ vanishes and $H^k(X;G)\to\cH^k(X;G)$ is an isomorphism.

If $\setof{A_i}$ is a tower of finitely generated abelian groups, then $\limone{i}A_i$ is a
divisible abelian group of the form $\Ext(B,\Z)$ where $B$ is a countable torsion-free abelian group. It follows that if
nontrivial, it is uncountable. See Jensen \cite{Jensen}, Th\'eor\`eme 2.7 for a more precise description; we deal here
with the case of our interest where $A_i=H^k(M_i;\Z)$. First we need a lemma.

\begin{lemma}\label{lem:sixterm}
Let $0\to\setof{A_i}\to\setof{B_i}\to\setof{C_i}\to 0$ be a short exact sequence of towers of abelian groups.
Then there is a natural six-term exact sequence
\[	0\to\lim_iA_i\to\lim_iB_i\to\lim_iC_i\to\limone{i}A_i\to\limone{i}B_i\to\limone{i}C_i\to 0.	\]
\end{lemma}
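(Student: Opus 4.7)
The plan is to realize both $\lim$ and $\limone{}$ simultaneously as the kernel and cokernel of a single natural endomorphism, and then apply the snake lemma. Recall that for a tower $\{A_i\}$ with bonding maps $p_i\colon A_i\to A_{i-1}$, one has the endomorphism
\[
1-P_A\colon \prod_{i} A_i \to \prod_{i} A_i, \qquad (a_i)_i \mapsto \bigl(a_i-p_{i+1}(a_{i+1})\bigr)_i,
\]
whose kernel is $\lim_i A_i$ by definition, and whose cokernel is $\limone{i} A_i$ (as stated in the paragraph preceding the lemma). Analogous maps $1-P_B$ and $1-P_C$ are attached to the towers $\{B_i\}$ and $\{C_i\}$.

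The first step is to observe that a short exact sequence of towers yields, upon taking the (countable) direct product, a short exact sequence
\[
0\to\prod_i A_i\to\prod_i B_i\to\prod_i C_i\to 0
\]
of abelian groups. This uses only that direct products in the category of abelian groups are exact, which is immediate (the product is formed levelwise and exactness is checked elementwise). Naturality of the bonding morphisms and of $1-P$ in the tower gives a commutative diagram
\[
\begin{CD}
0 @>>> \prod_i A_i @>>> \prod_i B_i @>>> \prod_i C_i @>>> 0 \\
@. @VV{1-P_A}V @VV{1-P_B}V @VV{1-P_C}V @. \\
0 @>>> \prod_i A_i @>>> \prod_i B_i @>>> \prod_i C_i @>>> 0
\end{CD}
\]
with exact rows.

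The second step is to apply the snake lemma to this diagram. Identifying kernels with $\lim_i$ and cokernels with $\limone{i}$ for each of the three towers, the snake lemma delivers precisely the six-term exact sequence
\[
0\to\lim_i A_i\to\lim_i B_i\to\lim_i C_i\xrightarrow{\ \partial\ }\limone{i} A_i\to\limone{i} B_i\to\limone{i} C_i\to 0,
\]
with the terminal $0$ arising because $1-P_C$ is surjective onto its cokernel (and the snake lemma's rightmost cokernel quotient is the cokernel of the rightmost vertical map). Naturality of the sequence follows from naturality of the snake lemma's connecting homomorphism.

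There is essentially no obstacle here; the only thing worth double-checking is the cokernel description, namely that $\limone{i}A_i$ really is $\mathrm{coker}(1-P_A)$, which is the definition already adopted in the paper just above the lemma. Everything else is formal diagram chasing.
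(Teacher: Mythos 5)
Your proof is correct and follows exactly the approach indicated in the paper, which says the result ``is an easy consequence of the above definition of $\lone$ and the snake lemma.'' You have simply spelled out the snake lemma argument in full, including the observation that countable products are exact in abelian groups.
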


The proof is an easy consequence of the above definition of $\lone$ and the snake lemma.

\begin{proposition}\label{prop:vanishing}
Let $\setof{M_i}$ be an exhaustion of $X$ as above and let $T$ denote the torsion subgroup of $H_{k-1}(X)$.
Then $\limone{i}H^{k-1}(M_i;\Z)$ is isomorphic to $\Ext(H_{k-1}(X)/T,\Z)$.
Hence if $\Ext(H_{k-1}(X)/T,\Z)=0$ then the natural morphism 
$H^{k}(X;\Z)\to\cH^{k}(X;\Z)$ is an isomorphism.
\end{proposition}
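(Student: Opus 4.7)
The plan is to dualize the tower of cohomology groups via the universal coefficient theorem, reduce the problem to $\limone{i}$ of a tower of $\Hom$'s of finitely generated free abelian groups, and finally identify that $\limone{i}$ with an $\Ext$ group using a Milnor-type telescope.

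I would begin by observing that each $M_i$ is a compact handlebody and therefore has finitely generated integral homology in every degree. The universal coefficient theorem, applied naturally in $M_i$, gives a short exact sequence of towers
\[
 0 \to \{\Ext(H_{k-2}(M_i),\Z)\} \to \{H^{k-1}(M_i;\Z)\} \to \{\Hom(H_{k-1}(M_i),\Z)\} \to 0.
\]
Since $H_{k-2}(M_i)$ is finitely generated, $\Ext(H_{k-2}(M_i),\Z)$ is isomorphic to its (finite) torsion subgroup. Any tower of finite abelian groups satisfies Mittag--Leffler (the descending chain $\im(B_j \to B_i)$ stabilizes inside the finite group $B_i$), so its $\limone{i}$ vanishes. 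The six-term sequence of Lemma \ref{lem:sixterm} then yields
\[
\limone{i} H^{k-1}(M_i;\Z) \;\cong\; \limone{i}\Hom(H_{k-1}(M_i),\Z).
\]

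Next, set $A_i = H_{k-1}(M_i)$, let $T_i$ be its torsion subgroup, and put $F_i = A_i/T_i$, which is a finitely generated torsion-free, hence free, abelian group. Since $\Hom(-,\Z)$ annihilates torsion, the towers $\{\Hom(A_i,\Z)\}$ and $\{\Hom(F_i,\Z)\}$ coincide naturally. Singular homology commutes with the colimit along the exhaustion, so $\colim A_i = H_{k-1}(X)$. I claim $\colim T_i = T$: any torsion class of $H_{k-1}(X)$ is represented by some $a_i \in A_i$ whose multiple $n a_i$ dies in a later $A_j$, and then $a_i$ lands in $T_j$; conversely, the image of each $T_i$ in $H_{k-1}(X)$ is manifestly torsion. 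Exactness of filtered colimits applied to $0 \to T_i \to A_i \to F_i \to 0$ now yields $\colim F_i = H_{k-1}(X)/T$.

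For the final step I would use the standard presentation of a countable colimit as the cokernel of the shift map $1-s\colon \bigoplus_i F_i \to \bigoplus_i F_i$; since each $F_i$ is free, this is a two-term free resolution of $H_{k-1}(X)/T$. Applying $\Hom(-,\Z)$ gives the four-term exact sequence
\[
 0 \to \Hom(H_{k-1}(X)/T,\Z) \to \prod_i \Hom(F_i,\Z) \xrightarrow{1-s^*} \prod_i \Hom(F_i,\Z) \to \Ext(H_{k-1}(X)/T,\Z) \to 0,
\]
whose last cokernel is by definition $\limone{i} \Hom(F_i,\Z)$. Chaining the isomorphisms above proves the first assertion. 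The second is then immediate from the Milnor sequence (\ref{eq:chs}): vanishing of $\Ext(H_{k-1}(X)/T,\Z)$ forces $\limone{i} H^{k-1}(M_i;\Z)$ to vanish, and so $H^k(X;\Z) \to \cH^k(X;\Z)$ is an isomorphism. The only real subtlety is the identification $\colim T_i = T$; once that is in place, the Mittag--Leffler argument on the $\Ext$ side and the Milnor-type telescope on the $\Hom$ side are formal.
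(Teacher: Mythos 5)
Your argument follows the paper's proof step for step: the universal coefficient decomposition of the tower, the Mittag--Leffler vanishing of $\lone$ for the tower of finite $\Ext$ groups, the passage to the torsion-free quotients $F_i=H_{k-1}(M_i)/T_i$, and the identification of $\limone{i}\Hom(F_i,\Z)$ with $\Ext(\colim_i F_i,\Z)$. The only difference is cosmetic: you derive that last identification explicitly from the telescope presentation $0\to\bigoplus_iF_i\xrightarrow{1-s}\bigoplus_iF_i\to\colim_iF_i\to0$ and also spell out the verification that $\colim_iT_i=T$, whereas the paper simply cites Jensen for both; your version is correct and self-contained.
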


\begin{proof}
Universal coefficients yield a short exact sequence of towers
\[	
	0\to\setof{\Ext(H_{k-2}M_i,\Z)\,\vert\,i}\to\setof{H^{k-1}(M_i;\Z)\,\vert\,i}
	\to\setof{\Hom(H_{k-1}M_i,\Z)\,\vert\,i}\to 0.	
\]
As $\{\Ext(H_{k-2}M_i,\Z)\}$ is a tower of finite groups, Lemma \ref{lem:sixterm} and 
Proposition 2.3 of Jensen \cite{Jensen} yield an isomorphism $\limone{i}H^{k-1}(M_i;\Z)\cong\limone{i}\Hom(H_{k-1}M_i,\Z)$.
Set $T_i=T(H_{k-1}M_i)$ and note that $\Hom(H_{k-1}M_i,\Z)$
is naturally isomorphic to $\Hom(H_{k-1}M_i/T_i,\Z)$. As $H_{k-1}M_i/T_i$ is free,
$\limone{i}\Hom(H_{k-1}M_i,\Z)$ is isomorphic to $\Ext\big(\colim_i(H_{k-1}M_i/T_i),\Z\big)$
(see Jensen \cite[p.\ 16]{Jensen}). In turn, $\colim_i(H_{k-1}M_i/T_i)$ is isomorphic to 
$H_{k-1}X/T$ where $T=T(H_{k-1}X)$.
\end{proof}

The following immediate corollary to Proposition \ref{prop:vanishing} is an  
addendum to Theorem \ref{th:Hodge3} in Sect.\ \ref{sec:intro}.

\begin{corollary}\label{cor:vanishing}
Let $X$ be an $n$-dimensional $q$-complete manifold with $n+q-1=2k$ even.
Denote by $T$ the torsion subgroup of $H_{n+q-2}(X)$.
If $\Ext(H_{n+q-2}(X)/T,\Z)=0$, then Theorem \ref{th:Hodge3}  applies to every cohomology class 
in $H^{n+q-1}(X;\Z)$. This holds in particular if $H_{n+q-2}(X;\Z)$ is the direct 
sum of a free abelian group and a torsion abelian group; for instance, if it is finitely generated.
\end{corollary}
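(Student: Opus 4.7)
The plan is to derive this directly from Proposition \ref{prop:vanishing} and the Milnor short exact sequence (\ref{eq:chs}), with no additional geometric work.

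First, I would apply Proposition \ref{prop:vanishing} with $k$ replaced by $n+q-1$. The proposition identifies $\limone{i}H^{n+q-2}(M_i;\Z)$ with $\Ext(H_{n+q-2}(X)/T,\Z)$, where $T$ is the torsion subgroup of $H_{n+q-2}(X)$. Under the hypothesis $\Ext(H_{n+q-2}(X)/T,\Z)=0$, this $\lim^1$ term vanishes.

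Next, I would plug this into the Milnor-type exact sequence (\ref{eq:chs}), namely
\[
	0 \to \limone{i} H^{n+q-2}(M_i;\Z) \to H^{n+q-1}(X;\Z) \to \cH^{n+q-1}(X;\Z) \to 0.
\]
Vanishing of the left-hand term forces the natural map $H^{n+q-1}(X;\Z)\to\cH^{n+q-1}(X;\Z)$ to be an isomorphism. Consequently, every integral cohomology class of $X$ in degree $n+q-1=2k$ comes from a unique class in $\cH^{2k}(X;\Z)$, and Theorem \ref{th:Hodge3} provides an analytic cycle representative of the required form.

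For the final assertion, suppose $H_{n+q-2}(X;\Z)=F\oplus T$ with $F$ free abelian and $T$ torsion. Then $H_{n+q-2}(X;\Z)/T \cong F$ is a free abelian group, hence projective, so $\Ext(F,\Z)=0$; the hypothesis of the corollary is satisfied. The finitely generated case is a special instance via the structure theorem for finitely generated abelian groups. There is no real obstacle here — everything is bookkeeping once Proposition \ref{prop:vanishing} and the sequence (\ref{eq:chs}) are in hand; the only mild subtlety is noting that freeness, not merely finite generation, is what makes the $\Ext$ vanish in the splitting case.
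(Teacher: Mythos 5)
Your proof is correct and follows exactly the route the paper takes: the corollary is indeed an immediate consequence of Proposition \ref{prop:vanishing} applied in degree $k=n+q-1$ together with the Milnor sequence (\ref{eq:chs}), plus the elementary observation that a free-plus-torsion decomposition of $H_{n+q-2}(X;\Z)$ forces the quotient by torsion to be free, hence with vanishing $\Ext(\cdot,\Z)$. The paper gives no separate proof, labeling it an immediate corollary, and your bookkeeping matches what is intended.
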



\section{Examples} \label{sec:examples}
In this section we give a few examples  which illustrate the scope of our results.

Recall  that, if $X$ is a complex manifold of dimension $n$ and $\rho:X\to\R$ is smooth exhaustion function
which is $q$-convex on the set $X_{\rho>c}$ for some $c\in\R$, then 
any compact complex subvariety $A$ of $X$ of dimension $\dim A\ge q$ is contained
in the sublevel set $X_{\rho\le c}$. This is an easy consequence of the maximum principle
for plurisubharmonic functions. In particular, a $q$-complete manifold does
not contain any compact complex subvarieties of dimension $\ge q$; this bound is sharp 
as shown by the following example.

\begin{example} \label{ex:projective}
The manifold $X=\CP^n\setminus\CP^{n-q}$ is $q$-complete for any pair of
integers $1\le q\le n$. More generally, if  $A\subset \CP^n$ is a closed projective manifold 
of complex codimension $q$ then $X=\CP^{n}\setminus A$ is $q$-convex,
and is  $q$-complete if $A$ is a complete intersection.
 (See Barth \cite{Barth1970} and Peternell  \cite{Peternell} for these facts.)
Such a manifold $X$ contains homologically nontrivial compact complex submanifolds
of any dimension from $1$ up to $q-1$ (for example, projective linear subspaces avoiding $A$), 
but every compact complex subvariety of dimension $\ge q$ in $\CP^n$ intersects 
$A$ by Bezout's theorem.
\end{example}

We begin by an example showing that, in some cases, top dimensional cohomology classes 
may be represented both by compact and also by noncompact analytic cycles.

\begin{example}\label{ex:complexcycles}
Assume that $X^n$ is a $q$-complete manifold for some $q>1$. A  compact  complex submanifold 
$C\subset X$ of complex  of dimension $p\in\{1,\ldots,q-1\}$ represents a homology class 
$[C]\in H_{2p}(X,\Z)$, and hence, by the Poincar\'e-Lefschetz duality, also
a cohomology class $u\in H^{2k}(X;\Z)$ with $k=n-p$.
If $2p=n-q+1$ then, by Theorem  \ref{th:Hodge3}, the class $u$ is also represented by an 
analytic cycle consisting of noncompact closed $p$-dimensional
subvarieties of $X$. The equality $2p=n-q+1$ holds for some $p\in\{1,\ldots,q-1\}$ if and only if
$n$ and $q$ are of different parity and $2\le n-q+1 \le 2q-2$; equivalently, 
\begin{equation}\label{eq:triples}
	3 \le q+1 \le n \le 3q-3,\quad p=(n-q+1)/2.
\end{equation}
For any triple of integers $(n,p,q)$ satisfying (\ref{eq:triples}) we get a nontrivial example 
by taking $X=\CP^n\setminus\CP^{n-q}$ and $C\cong \CP^p \subset X$.
In this case the class $[C]\in H_{2p}(X,\Z)\cong H_{2p}(\CP^n,\Z)\cong \Z$ is a generator
of the corresponding homology group.
\end{example}

In the remainder of the section we focus on examples where the Hodge
representation of top dimensional cohomology classes by compact analytic cycles is impossible,
but our results give representation by noncompact analytic cycles.

\begin{example} \label{ex:projective2}
Let $A\subset\CP^n$ be a complete intersection defined by $q$ independent holomorphic equations.
Then the complement $X=\CP^n\setminus A$ is $q$-complete (see Example \ref{ex:projective}).
Assume that $n+q-1$ is even.  By Theorem \ref{th:Hodge3} and Remark \ref{rem:finitely-gen} 
every element of $H^{n+q-1}(X;\Z)$ can be represented by a noncompact analytic cycle of complex dimension 
$p=(n-q+1)/2$ in $X$. However, if $p\ge q$ then a nonzero element of $H^{n+q-1}(X;\Z)$ 
can not be represented by a compact analytic $p$-cycle since every
such cycle intersects $A$ in view of Bezout's theorem (or by observing that a 
$q$-complete complex manifold does not contain any compact complex subvarieties of dimension 
$\ge q$). The inequality $p\ge q$ is equivalent to $n+1\ge 3q$. The lowest dimensional
non-Stein example arises when $q=2$ and $n=5$; in this case $X=\CP^5\setminus A$
where $A$ is a $3$-fold defined  by $2$ independent equations.
\end{example}

With $X$ as in Example  \ref{ex:projective2}, the top dimensional cohomology group 
$H^{n+q-1}(X;\Z)$ can be quite big and it can contain torsion as shown by the following proposition.

\begin{proposition}\label{prop:coh}
Let $A\subset\CP^n$ be a complete intersection of codimension $q$
and set  $X=\CP^n\setminus A$. Assume that $m:=\dim A=n-q\ge 1$ is odd. Then 
\[
	H^{n+q-1}(X;\Z)=\Z_d \oplus  \Z^{\beta_m}
\] 
where $d$ is the degree of $A$ and $\beta_m=m+1-\chi(A)$ is the $m$-th Betti number of $A$. 
\end{proposition}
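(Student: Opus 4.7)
The plan is to compute $H^{n+q-1}(X;\Z)$ by converting to a relative homology group and running the long exact sequence of the pair $(\CP^n,A)$. Setting $m=n-q$, so that $2n-(n+q-1)=m+1$, Lefschetz-Poincar\'e duality (\ref{eq:PD}) applied to the compact manifold $\CP^n$ with closed subcomplex $A$ yields
\[
H^{n+q-1}(X;\Z)=H^{n+q-1}(\CP^n\setminus A;\Z)\cong H_{m+1}(\CP^n,A;\Z),
\]
so it suffices to compute the latter. Since $m$ is odd, $H_m(\CP^n;\Z)=H_{m+2}(\CP^n;\Z)=0$ while $H_{m+1}(\CP^n;\Z)=\Z$. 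The relevant portion of the long exact sequence of $(\CP^n,A)$ therefore collapses to
\[
H_{m+1}(A;\Z)\xrightarrow{j_*}\Z\to H_{m+1}(\CP^n,A;\Z)\to H_m(A;\Z)\to 0,
\]
where $j\colon A\hookrightarrow\CP^n$ is the inclusion, so everything reduces to identifying $j_*$ and $H_m(A;\Z)$.

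For this I would invoke the Lefschetz hyperplane theorem for smooth complete intersections: $j_*\colon H_k(A;\Z)\to H_k(\CP^n;\Z)$ is an isomorphism for $k<m$ and surjective for $k=m$. In particular $H_{m-1}(A;\Z)\cong\Z$ is torsion-free, whence by Poincar\'e duality on $A$ together with the universal coefficient theorem $H_m(A;\Z)$ is also torsion-free, and $H_{m+1}(A;\Z)\cong H^{m-1}(A;\Z)\cong H^{m-1}(\CP^n;\Z)=\Z$, generated (via Poincar\'e duality on $A$) by the $(m+1)$-cycle dual to $h|_A^{(m-1)/2}$, where $h\in H^2(\CP^n;\Z)$ is the hyperplane class. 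The heart of the argument is to show that $j_*$ is multiplication by $d$. I plan to derive this from the projection formula: the Gysin map $j_!\colon H^*(A;\Z)\to H^{*+2q}(\CP^n;\Z)$ satisfies $j_!(1)=d\cdot h^q$, because the Poincar\'e dual of $[A]\in H_{2m}(\CP^n;\Z)$ is $d\cdot h^q$ for any complete intersection of codimension $q$ and degree $d$; hence
\[
j_!\bigl(h|_A^{(m-1)/2}\bigr)=h^{(m-1)/2}\cup j_!(1)=d\cdot h^{(n+q-1)/2},
\]
which, after dualizing, is precisely the assertion that $j_*$ multiplies the chosen generator of $H_{m+1}(A;\Z)$ by $d$. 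Equivalently, $[A\cap L]=d\cdot[\CP^{(m+1)/2}]$ in $H_{m+1}(\CP^n;\Z)$ for a generic linear subspace $L\subset\CP^n$ of codimension $(m-1)/2$.

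Substituting back into the exact sequence yields
\[
0\to\Z_d\to H_{m+1}(\CP^n,A;\Z)\to H_m(A;\Z)\to 0,
\]
which splits because $H_m(A;\Z)$ is free, producing $H_{m+1}(\CP^n,A;\Z)\cong\Z_d\oplus\Z^{\beta_m}$. It remains to verify $\beta_m=m+1-\chi(A)$. For every $j\ne m$ the Lefschetz hyperplane theorem together with Poincar\'e duality on $A$ forces $\beta_j(A)=\beta_j(\CP^n)$, so $\beta_j(A)=1$ for each of the $m+1$ even indices $j\in\{0,2,\ldots,2m\}$ and $\beta_j(A)=0$ for each odd $j\ne m$; since $m$ itself is odd, $\chi(A)=(m+1)-\beta_m$, which rearranges to the claimed formula. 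The main obstacle will be the Gysin/projection-formula identification of $j_*$ with multiplication by $d$; the remaining steps are routine consequences of the long exact sequence, Lefschetz's theorem, and universal coefficients.
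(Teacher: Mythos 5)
Your proof takes essentially the same route as the paper: Poincar\'e--Lefschetz duality to convert $H^{n+q-1}(X;\Z)$ into $H_{m+1}(\CP^n,A;\Z)$, the long exact sequence of the pair $(\CP^n,A)$ with the parity observations for $H_*(\CP^n)$, the Lefschetz hyperplane theorem together with Poincar\'e duality on $A$ to control $H_*(A)$, and the resulting split short exact sequence. The one place where you go beyond the paper is the identification of the map $H_{m+1}(A;\Z)\to H_{m+1}(\CP^n;\Z)$ with multiplication by $d$: the paper asserts this fact in one sentence, whereas you justify it cleanly via the Gysin map and the projection formula, using $j_!(1)=d\,h^q$ and the compatibility of $j_!$ with $j_*$ under duality. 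This is a genuine strengthening of the exposition rather than a different method. Your torsion-freeness argument for $H_m(A;\Z)$ (via $H_{m-1}(A)$ torsion-free, universal coefficients, and Poincar\'e duality on $A$) and your computation of $\beta_m=m+1-\chi(A)$ from the Betti numbers of a complete intersection are both correct and agree with what the paper implicitly uses.
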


The Euler characteristic $\chi(A)$ can be computed easily from the multidegree of $A$ by the Hirzebruch formula. 
For example, if $A$ is the intersection of two quintics in $\CP^5$ then its third Betti number is $4504$, so we have
$H^6(\CP^5\setminus A)\cong \Z_{5}\oplus \Z^{4504}$.

\begin{proof}
We omit the coefficient group $\Z$ in the calculations.
The Poincar\'e-Lefschetz duality gives $H^{n+q-1}(X)=H_{n-q+1}(\CP^n,A)=H_{m+1}(\CP^n,A)$.
The latter group can be computed from the exact homology sequence of the pair 
$i\colon A\hookrightarrow\CP^n$:
\[
	H_{m+1}(A) \stackrel{i_*}{\longrightarrow} H_{m+1}(\CP^n) \longrightarrow 
	H_{m+1}(\CP^n,A) \stackrel{\partial}{\longrightarrow} H_{m}(A) \longrightarrow H_{m}(\CP^n). 
\]	
Since $m$ is odd, we have $H_{m+1}(\CP^n)=\Z$ and $H_{m}(\CP^n)=0$. 
The Lefschetz hyperplane theorem yields (abstract) isomorphisms $H^k(A;\Z)\cong H^k(\CP^n;\Z)$
for $k\neq m$, $k\le 2m$. In  our case, with $k=m+1$ even, both groups equal $\Z$ and the map 
$\Z\stackrel{i_*}{\longrightarrow} \Z$ is multiplication by
$d$. Moreover, $H^m(A;\Z)$ is also free, $H^m(A;\Z)=\Z^{\beta_m}$.
This gives a short exact sequence $0\to \Z_d \to H_{m+1}(\CP^n,A) \to \Z^{\beta_m}\to 0$
from which the result follows.
\end{proof}

\begin{remark} 
It may be interesting to observe that the free part of $H^{n+q-1}(X;\Z)$ in 
Proposition \ref{prop:coh} is not representable by compact cycles even smoothly. 
To see this,  let $Z$ be an oriented closed smooth submanifold of 
$X$ of dimension $n-q+1$.
By compactness, there is a smooth tubular neighborhood $W$ of $Z$ in $\CP^n$ that is contained in $X$. 
The cohomology class dual to $Z$ is the image of the Thom class of the normal bundle under the composite 
\[	
	H^{n+q-1}(W,W\setminus Z;\Z)\xleftarrow{\rm{excision}}
       H^{n+q-1}(X,X\setminus Z;\Z)\xrightarrow{\rm{restriction}}H^{n+q-1}(X;\Z).	
\]
But a class in $H^{n+q-1}(X,X\setminus Z;\Z)$ can be excised back to a class in 
$H^{n+q-1}(\CP^n,\CP^n\setminus Z;\Z)$ and therefore the restriction
morphism $H^{n+q-1}(X,X\setminus Z;\Z)\to H^{n+q-1}(X;\Z)$ factors through 
$H^{n+q-1}(\CP^n;\Z)\to H^{n+q-1}(X;\Z)$. As shown in the
proof of Proposition \ref{prop:coh}, the image of the latter is precisely the torsion subgroup 
of $H^{n+q-1}(X;\Z)$ and misses the free subgroup completely.
\end{remark}

We conclude by considering  a couple examples of the form $X=Y\times \CP^1$ with $Y$ Stein,
so $X$ is $2$-complete. Let $Y$ denote the Stein surface
\begin{equation}\label{eq:Y}
	Y=\bigl\{[z_0:z_1:z_2]\in \C\P^2: z_0^2+z_1^2+z_2^2\ne 0 \bigr\}.
\end{equation}
For $k\in \N$ let $Y^k$ denote the $k$-th Cartesian power of $Y$, 
a Stein manifold of dimension $2k$. For every pair of integers 
$1\le j\le k$ the Chern class $c_j(Y^k)\in H^{2j}(Y^k;\Z)$ 
is the nonzero element of the group 
$H^{2j}(Y^k;\Z)\cong \Z_2$ (cf.\ Forster \cite[Proposition 3]{Forster1970}). 
In particular, $c_1(Y)\ne 0$ and the higher 
Chern classes of $Y$ vanish. (Compare with Example \ref{ex:Ex1}.)

\begin{example}\label{ex:Y}
Consider the $3$-manifold $X=Y\times \CP^1$, where $Y$ is given by (\ref{eq:Y}). 
Clearly $X$ is $2$-complete and has finite topology. From the K\"unneth theorem we
infer that 
\[ 
	H^4(X;\Z)\cong H^2(Y;\Z)\otimes H^2(\CP^1;\Z)\cong\Z_2.
\]
It is generated by the cohomological cross product $c_1(Y)\times \xi$ where
$\xi\in H^2(\CP^1;\Z)$ is the first Chern class of the tautological line bundle.
By Theorem \ref{th:Hodge3} and Remark \ref{rem:finitely-gen}, the nonzero element
of $H^4(X;\Z)$ can be represented by a noncompact analytic $1$-cycle. However,
it can not be represented by a compact analytic $1$-cycle.
Indeed, every compact analytic $1$-cycle in $X$ equals
$\sum_j\{y_j\}\times \CP^1$ for some $y_j\in Y$, and is homologous
to a multiple of $\{y_0\} \times \CP^1=:Z$ for any $y_0\in Y$. 
It is easily seen by standard arguments that the cohomology class in 
$H^4(X;\Z)$ dual to $Z$ (in the sense of Section \ref{sec:openMfd}) is trivial.  
\end{example}

\begin{example}\label{ex:Y2}
Let $Y$ be the surface (\ref{eq:Y}) and set $X=Y^2\times \CP^1$. We have
$n=\dim X=5$, $q=2$ (i.e., $X$ is 2-complete), and 
$n+q-1=6$. From the K\"unneth theorem we infer that 
\[
	H^6(X;\Z) \cong H^2(Y;\Z)\otimes H^2(Y;\Z)\otimes H^2(\CP^1;\Z)\cong\Z_2.
\]
It is generated by the cross product $c_1(Y)\times c_1(Y)\times\xi$ where $\xi\in H^2(\CP^1;\Z)$
is the first Chern class of the tautological line bundle.  By Theorem \ref{th:Hodge3}
and Remark \ref{rem:finitely-gen}, the nonzero element of $H^6(X;\Z)$ can be represented
by a noncompact analytic $2$-cycle in $X$. However, since $X$ is $2$-complete, it does not
admit any compact analytic $2$-cycles.

Similar results hold for the manifolds $X_{k,m}=Y^k\times \CP^m$ for higher values of $k,m\in \N$.
\end{example}

\medskip
\bf Acknowledgements. \rm
F.\ Forstneri\v c was supported in part by the Slovenian Research Agency (ARRS)
grants P1-0291 and J1-5432.  J.\ Smrekar was supported in part by the Slovenian
Research Agency grants P1-0292-0101 and J1-4144-0101.
A.\ Sukhov thanks the Mathematics Department of the University of Ljubljana and Institute of Mathematics, Physics and Mechanics
(IMFM), Ljubljana,  for several invitations and for excellent conditions during the work on this paper.


\bibliographystyle{amsplain}

\end{document}